\newcommand{\Z}{\ensuremath{\mathbb{Z}_2^n}}
\theoremstyle{plain}
\newtheorem{theorem}{Theorem}[section]
\newtheorem{corollary}[theorem]{Corollary}
\newtheorem{proposition}[theorem]{Proposition}
\theoremstyle{definition}
\newtheorem{definition}[theorem]{Definition}
\newtheorem{example}[theorem]{Example}
\theoremstyle{remark}
\newtheorem{conjecture}[theorem]{Conjecture}
\title{Spin structures on generalized real Bott manifolds}
\begin{document}

\maketitle

\begin{center}
	\textbf{Asl\i \ G\"U\c{C}L\"UKAN \.{I}LHAN, Sabri Kaan G\"{U}RB\"{U}ZER, Semra PAMUK}
\end{center}

\begin{abstract}In this paper, we give a necessary and sufficient condition for a generalized real Bott manifold to have a Spin structure in terms of column vectors of the associated matrix. We also give an interpretation of this result to the associated acyclic $\omega$-weighted digraphs. Using this, we obtain a family of real Bott manifolds that does not admit Spin Structure.
	
\textbf{Keywords:} Generalized Bott manifold, small cover, acyclic digraph
\end{abstract}

2020 {\itshape AMS Mathematics Subject Classification:} 37F20; 57S12

\section{Introduction} \label{sect:Introduction}
A generalized real Bott tower of height $k$ is a sequence of real projective bundles\\
\begin{eqnarray}\label{eq:BGott}	B_k \rightarrow B_{k-1} \rightarrow \cdots \rightarrow B_1 \rightarrow \{pt\}
\end{eqnarray} where $B_i$ is the projectivization of the Whitney sum of $n_i+1$ real line bundles
over $B_{i-1}$. This notion is introduced by Choi, Masuda and Suh \cite{Main} as a generalization of the notion of a Bott tower given in \cite{GrossbergKArshon}. The manifold $B_k$ is called a real Bott manifold when $n_i=1$ for each $i$ and a generalized real Bott manifold, otherwise. The manifold $B_k$ can be realized as a small cover over $\prod_{i=1}^k \Delta^{n_i}$ where $\Delta^{n_i}$ is the $n_i$-simplex \cite[Corollary 4.6]{KurakiLu}. It is also known that every small cover over a product of simplices is a generalized real Bott manifold \cite[Remark 6.5]{Main}.

Let $P$ be a simple convex polytope of dimension $n$ with the facet set $\mathcal{F}(P)=\{F_1,\cdots, F_m\}$. For every small cover $M$ over $P$,  there is an associated $(n \times (m-n))$ matrix $A=[a_{ij}]$ with entries in $\mathbb{Z}_2$ which can be used to reconstruct $M$ (See Section \ref{sect:Prelim}). Moreover, the mod $2$ cohomology ring structure of $M$ depends only on the face poset of $P$ and the matrix $A$. More precisely, let $\mathbb{Z}_2[P]$ be the  Stanley-Reisner ring of $P$, that is, the quotient of the polynomial ring $\mathbb{Z}_2[x_1,\cdots,x_m]$ with the ideal $I$ generated by the square free monomials $x_{i_1}\cdots x_{i_r}$ for which $F_{i_1}\cap \cdots \cap F_{i_r}$ is empty. There is a graded ring isomorphism between $H^{\ast}(M, \mathbb{Z}_2)$ and $\mathbb{Z}_2[P]/J$ where $J$ is the homogeneous ideal generated by the monomials $\displaystyle x_i+\sum_{j=1}^{m-n}a_{ij}x_{n+j},$ \cite[Theorem 4.14]{DavisJanuszkiewicz}. Here the degree of $x_i$ is $1$. In \cite[Corollary 6.8]{DavisJanuszkiewicz}, Davis and Januskiewicz show that the total Stiefel-Whitney class of $P$ is given by
\begin{eqnarray} \label{eq:total} w(M)=\Big( \prod_{i=1}^{m-n}(1+x_{n+i})\Big) \cdot \Big( \prod_{i=1}^{n}\big(1+\sum_{j=1}^{m-n}a_{ij}x_{n+j}\big)\Big) \ \ \ \mathrm{mod} \ I. \end{eqnarray}
Therefore the coefficient of $x_i$ in the first Stiefel-Whitney class of $M$ is one more than the sum of the entries of the $(i-n)$-th column of $A$ when $i> n$ and zero, otherwise. Hence the small cover $M$ is orientable if and only if the sum of the entries of the $i$-th column of the matrix $A$ is congruent to $1$ modulo $2$ for each $i\geq 1$ \cite[Theorem, 1.7]{NAkayamaNishimura}. Since $A$ is a matrix over $\mathbb{Z}_2$, the sum of the entries of the $j$-th column of $A$ is equivalent to the dot product of the column vector with itself. Therefore the small cover $M$ is orientable if and only if $A_j\cdot A_j \equiv 1$ modulo $2$ where $A_j$ denote the $j$-th column vector of $A$. 

In Section \ref{sect:Prelim}, we observe that a small cover $M$ has a Spin structure when $A_i \cdot A_i\equiv3  \ (\mathrm{mod} \ 4)$ and $A_i \cdot A_j\equiv 0 \ (\mathrm{mod} \ 2)$ for all $1\leq i < j \leq m-n$ (Corollary \ref{cor:spingeneral}). It turns out that when $P$ is a product of simplices of dimensions greater than $1$, the converse is also true (Corollay \ref{cor:seconSWforPS}). In other words, when each $B_i$ is a projectivization of the Whitney sum of $3$ or more line bundles, the generalized Bott manifold $B_k$ has a Spin structure if and only if $A_i \cdot A_i\equiv3  \ (\mathrm{mod} \ 4)$ and $A_i \cdot A_j\equiv 0 \ (\mathrm{mod} \ 2)$ for all $1\leq i < j \leq m-n$. In Theorem \ref{theorem:seconSWforPS}, we give a criterion for an arbitrary generalized Bott manifold $B_k$ to have a Spin structure. It is equivalent to the criterion given in \cite{DsouzaUma}. 

In \cite[Lemma 2.1]{Gasior1}, Gasior gives a formula for the second Stiefel-Whitney class of $M(A)$ in terms of the second Stiefel-Whitney classes of $M(A_{ij}),$ where $A_{ij}$ is an $n\times n$ matrix whose $k$-th column is $A_k$ if $k=i,j$ and $0$ otherwise, called an elementary component. After reducing the problem to elementary components, the author gives a necessary and sufficient condition on existence of a Spin structure on them in \cite[Theorem 1.2]{Gasior1} which can also be obtained as a corollary of Theorem \ref{theorem:seconSWforPS}. Moreover, Proposition \ref{prop:Gasior1} is a generalization of this result to the generalized real Bott manifolds. 

It is well-known that real Bott manifolds can be classified by acyclic digraphs \cite{Main}. In \cite[Theorem 4.5]{DsouzaUma2}, Dsouza gives a necessary and sufficient condition on the associated digraph for a given real Bott manifold to have a spin structure. In \cite{GuclukanIlhanGurbuzer}, Güçlükan İlhan and Gürbüzer show that for every generalized Bott manifold $B_k$, there is an associated acyclic digraph $D_{B_k}$ on labeled vertices $\{v_1,\cdots,v_k\}$ where each edge from a vertex $v_i$ has a vector weight in $\mathbb{Z}_2^{n_i}$. In Section \ref{sect:digraphpart}, we generalize the condition given by Dsouza and Uma to a condition on $D_{B_k}$ for the associated generalized Bott tower $B_k$ to have a Spin structure (Theorem \ref{thm:SpinDigraph}).

The Wu formula implies that $w_3(M)=0$ whenever $w_1(M)$ and $w_2(M)$ are zero. Therefore, the result of Section \ref{sect:SpinStr} give us a sufficient conditions for $w_3(M)$ to be zero. In Section \ref{sect:higherSW}, we obtain a formula for $w_3(M)$ when $M$ is a small cover over a product of simplices of dimensions greater than equal to $3$. As a corollary, we give a necessary conditions for the vanishing of the third Stiefel-Whitney class of $M$. We obtain similar results for $w_4$ and we classify small covers over a product of simplices of dimensions greater than equal to $4$ whose first four Stiefel-Whitney classes are zero. 

\section{Small covers}	
\label{sect:Prelim}
Let $P$ be an $n$-dimensional simple convex polytope and $\mathcal{F}(P)=\{F_1,F_2,\dots, F_m\}$ be the set of facets of $P$. A small cover over $P$ is an $n$-dimensional smooth closed manifold M with a locally standard $\mathbb{Z}_2^n$-action whose orbit space is $P$. 
Two small covers $M_1$ and $M_2$ over $P$ are said to be Davis-Jansukiewicz equivalent if there is a weakly $\Z$-equivariant homeomorphism between $M_1$ and $M_2$ covering the identity on $P$. The Davis-Januskiewicz classes of small covers over $P$ are given by the characteristic functions. 

A characteristic function $\lambda: \mathcal{F}(P) \rightarrow \mathbb{Z}_2^n$ over $P$ is a $\mathbb{Z}_2^n$-coloring function satisfying the following non-singularity condition: 
$$F_{i_1} \cap \cdots \cap F_{i_n}\neq \emptyset \ \ \ \Rightarrow \\ \langle \lambda(F_{i_1}), \dots, \lambda(F_{i_n})\rangle=\Z.$$ In \cite{Main}, Davis and Janueskiewicz construct a small cover $M(\lambda)$ associated to a given characteristic function $\lambda$ as the quotient space of the space $(P \times \Z)$ and the equivalence relation defined by $$(p,g)\sim(q,h) \ \mathrm{if} \ p=q \ \mathrm{and} \ g^{-1}h \in \langle\lambda(F_{i_1}), \dots, \lambda(F_{i_k})\rangle$$ where the intersection $ \overset{k}{\underset{j=1}{\bigcap}} F_{i_j}$ is the minimal face containing $p$ in its relative interior.

\begin{theorem}\label{thm:DJ-class}\cite[Proposition 1.8]{Main} For every small cover $M$ over $P$, there is a characteristic function $\lambda$ with $\Z$-homeomorphism $M(\lambda) \rightarrow M$ covering the identity on $P$.
\end{theorem}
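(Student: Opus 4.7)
The plan is to recover the characteristic function $\lambda$ from the local structure of the $\mathbb{Z}_2^n$-action on $M$ and then build an equivariant homeomorphism $M(\lambda)\to M$ that covers the identity on $P$.

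First I would read off $\lambda$ from the isotropy data. Because the $\mathbb{Z}_2^n$-action is locally standard, any point $x\in M$ whose image $\pi(x)$ lies in the relative interior of a codimension-$r$ face $F=F_{i_1}\cap\cdots\cap F_{i_r}$ has a stabilizer that is a rank-$r$ subgroup of $\mathbb{Z}_2^n$ depending only on $F$. For each facet $F_i$ the stabilizer $G_{F_i}$ along $\pi^{-1}(\mathrm{int}\,F_i)$ is therefore a well-defined subgroup of order $2$, and I set $\lambda(F_i)$ equal to its nontrivial element. The non-singularity axiom then follows immediately from local standardness: if $v=F_{i_1}\cap\cdots\cap F_{i_n}$ is a vertex, the stabilizer at the fixed point above $v$ is all of $\mathbb{Z}_2^n$, and since it is generated by the subgroups $G_{F_{i_j}}$, the vectors $\lambda(F_{i_1}),\dots,\lambda(F_{i_n})$ form a basis.

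Next I would construct the equivariant homeomorphism $\Phi\colon M(\lambda)\to M$. The key ingredient is a continuous section $s\colon P\to M$ of $\pi$. Such a section can be produced by induction over the face skeleton of $P$: pick a fixed point of the $\mathbb{Z}_2^n$-action over each vertex, and extend over higher dimensional faces using a standard-chart argument (on a standard chart $\mathbb{R}^n/\mathbb{Z}_2^n\cong \mathbb{R}^n_{\ge 0}$ the zero section gives a canonical lift). With $s$ in hand, define
\[
\Phi([p,g])\;=\;g\cdot s(p).
\]
To see that $\Phi$ is well defined I check that the equivalence relation on $P\times\mathbb{Z}_2^n$ is consistent with the stabilizer of $s(p)$: if $p$ lies in the interior of $F_{i_1}\cap\cdots\cap F_{i_k}$, then the stabilizer of $s(p)$ is $\langle\lambda(F_{i_1}),\dots,\lambda(F_{i_k})\rangle$ by the definition of $\lambda$, which is exactly the subgroup used in the relation defining $M(\lambda)$.

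It remains to show $\Phi$ is a $\mathbb{Z}_2^n$-equivariant homeomorphism covering $\mathrm{id}_P$. Equivariance and the covering property are built into the formula. Surjectivity is immediate from the existence of $s$, and injectivity reduces to the stabilizer computation above. Both $\Phi$ and its inverse are continuous by compactness of $P$ and the local triviality provided by the standard charts. The main obstacle, and the only place the argument is not formal, is producing the section $s$ compatibly across adjacent standard charts; this is the heart of the original Davis--Januszkiewicz construction and is handled by choosing lifts of fixed points over vertices first and then extending by the canonical zero-section in each local standard chart, noting that two such extensions over a common face differ only by an element of the corresponding stabilizer and hence descend to the same map after quotienting.
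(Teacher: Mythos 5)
The paper itself offers no proof of this statement: it is quoted with the citation \cite[Proposition 1.8]{Main}, so there is no in-paper argument to compare yours against. Your blind attempt does, however, follow the standard Davis--Januszkiewicz line of reasoning (recover $\lambda$ from isotropy, build a section, conjugate the action through it), and everything you do after the section exists is correct: well-definedness of $\Phi([p,g])=g\cdot s(p)$ uses exactly that $\operatorname{Stab}(s(p))=\langle\lambda(F_{i_1}),\dots,\lambda(F_{i_k})\rangle$, equivariance and the covering property are formal, injectivity and surjectivity reduce to the stabilizer computation, and a continuous bijection between compact Hausdorff spaces is a homeomorphism.

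The part that needs tightening is precisely the part you flag, the existence of a continuous section $s\colon P\to M$. Your skeleton-induction sketch is not obviously well-posed: over a vertex there is nothing to choose ($\pi^{-1}(v)$ is a single fixed point), and once you reach a $d$-face $F$ the section is already fully prescribed on $\partial F$, so you must \emph{prove} that this boundary data extends over $\mathring F$ rather than merely assert it. The final sentence of your sketch (``two such extensions over a common face differ only by an element of the corresponding stabilizer and hence descend to the same map after quotienting'') does not quite parse, because a section is a map into $M$, not something one quotients. A cleaner and complete route is: the restriction $\pi^{-1}(\mathring P)\to\mathring P$ is a free $\mathbb Z_2^n$-action, hence a principal $\mathbb Z_2^n$-bundle over the contractible base $\mathring P$, hence trivial, hence admits a section $s_0$; near any boundary point $q$ lying in the relative interior of a face $F$ the locally standard chart identifies $\pi^{-1}(U)$ with $(U\times\mathbb Z_2^n)/\!\sim$, and since $U\cap\mathring P$ is connected, $s_0$ is a single constant in that chart and therefore extends continuously across $\partial P$. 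Substituting this lemma for your skeleton induction closes the gap. Finally, in your first step you should add one sentence noting why the stabilizer over $\operatorname{int}F_i$ is independent of the point: stabilizers in an abelian group are constant on orbits, local standardness makes them locally constant, and $\operatorname{int}F_i$ is connected.
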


The group $GL(n, \mathbb{Z}_2)$ acts freely on the set of characteristic functions over $P$ by composition. Moreover, the orbit space of this action is in one-to-one correspondence with the Davis-Januskiewicz equivalence classes of small covers over $P$. Fix a basis  $e_1,\cdots, e_n$ for $\Z$ and reorder facets of $P$ in such a way that $\underset{i=1}{\overset{n}{\bigcap}}F_i \neq \emptyset$. By the above theorem, for a given small cover $M$ over $P$, there is an $(n \times (m-n))$-matrix $A=[a_{ij}]$ such that $M$ and $M(\lambda)$ are Davis-Januszkiewicz equivalent where $$ \lambda(F_i)= \begin{cases} 
e_i, & i\leq n \\
\displaystyle \sum_j a_{ji}e_j & i>n. 
\end{cases}$$ 

\begin{theorem}[Theorem 4.14, \cite{Main}] The mod $2$ cohomology ring of $M$ is $\mathbb{Z}[P]/J$, where $J$ is the homogeneous ideal generated by the monomials $\displaystyle x_i+\sum_{j=1}^{m-n}a_{ij}x_{n+j}$. 
\end{theorem}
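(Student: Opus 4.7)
The plan is to work through the Borel construction $M_T := M \times_T ET$, where $T = \mathbb{Z}_2^n$, and recover $H^*(M;\mathbb{Z}_2)$ from its equivariant counterpart in two steps: first identify $H^*_T(M;\mathbb{Z}_2)$ with the Stanley--Reisner ring $\mathbb{Z}_2[P]$, and then quotient by the image of $H^{>0}(BT;\mathbb{Z}_2)$. To justify this second step I would apply the Leray--Hirsch theorem to the fibration $M \hookrightarrow M_T \to BT$, which requires producing, from the facet structure of $P$, enough equivariant classes that restrict to an additive basis of $H^*(M;\mathbb{Z}_2)$ on each fiber. The simplicity of $P$ allows one to build a $T$-invariant CW decomposition of $M$ indexed by faces of $P$, and the equivariant Poincar\'e duals of characteristic submanifolds and their intersections provide the required lift.

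The next step is to compute $H^*_T(M;\mathbb{Z}_2)$ explicitly. For each facet $F_i$, the preimage $M_i \subset M$ under the orbit map is a characteristic submanifold fixed by the rank-one subgroup $\langle \lambda(F_i)\rangle$; let $x_i \in H^1_T(M;\mathbb{Z}_2)$ denote its equivariant Thom class. The face poset of $P$ governs these classes: whenever $F_{i_1} \cap \cdots \cap F_{i_r} = \emptyset$, the corresponding submanifolds have empty intersection, forcing $x_{i_1}\cdots x_{i_r} = 0$. Verifying that these are the only relations, and that the classes $x_i$ generate $H^*_T(M;\mathbb{Z}_2)$, is the technical core; it can be handled by a Mayer--Vietoris induction over the face filtration of $P$ or, equivalently, by a cell-counting argument based on the CW structure built in the previous paragraph.

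Finally, I would identify the kernel of the pullback $H^*(BT;\mathbb{Z}_2) \to H^*_T(M;\mathbb{Z}_2)$. Fix $t_1,\ldots,t_n$ as the basis of $H^1(BT;\mathbb{Z}_2)$ dual to $e_1,\ldots,e_n$. A restriction computation at the $T$-fixed points of $M$ shows that the pullback of $t_i$ is $\sum_j \lambda(F_j)_i\, x_j$, where $\lambda(F_j)_i$ is the $i$-th coordinate of the characteristic vector of $F_j$. With the normalization fixed in Section \ref{sect:Prelim}, so that $\lambda(F_i) = e_i$ for $i \leq n$ and $\lambda(F_i) = \sum_j a_{ji} e_j$ for $i > n$, this pullback becomes exactly $x_i + \sum_{j=1}^{m-n} a_{ij}\, x_{n+j}$, which are the stated generators of $J$. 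I expect the main obstacle to be the Leray--Hirsch step: verifying that $H^*_T(M;\mathbb{Z}_2)$ is free over $H^*(BT;\mathbb{Z}_2)$ of the correct rank. This is where the simple-polytope hypothesis on $P$ is really used, and it is the step that requires the most care, whereas the remaining identifications are essentially bookkeeping once equivariant formality is in hand.
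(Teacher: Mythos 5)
The paper does not prove this theorem; it is quoted from Davis and Januszkiewicz (their Theorem 4.14; the bracketed citation in the theorem header appears to point at the wrong bibliography entry, since the introduction attributes the result to Davis and Januszkiewicz). There is therefore no in-paper proof to compare against. Your sketch is the Davis--Januszkiewicz strategy: realize $H^*_T(M;\mathbb{Z}_2)$ as the Stanley--Reisner ring via equivariant Thom classes of the characteristic submanifolds, establish freeness over $H^*(BT;\mathbb{Z}_2)$, and identify the ideal generated by $H^{>0}(BT;\mathbb{Z}_2)$ with $J$. Your fixed-point restriction computation, giving $t_i \mapsto \sum_j \lambda(F_j)_i x_j = x_i + \sum_{j=1}^{m-n} a_{ij} x_{n+j}$ under the normalization of Section 2, is correct and is exactly how the linear generators of $J$ arise.

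Where your outline has a genuine gap is the freeness step. Leray--Hirsch, as you invoke it, is circular: it requires classes in $H^*_T(M)$ restricting to a basis of $H^*(M)$ on the fiber, but you cannot certify that your candidates span without already knowing $\dim_{\mathbb{Z}_2}H^q(M)$ in each degree. Davis and Januszkiewicz break this circle by an independent computation of the mod $2$ Betti numbers of $M$: a generic linear functional on $P$ gives a cell decomposition of $M$ whose cell counts in each dimension are the $h$-vector of $P$, and the Hilbert series of $\mathbb{Z}_2[P]/J$ is the $h$-polynomial (here the nonsingularity of $\lambda$ guarantees that the degree-one relations form a linear system of parameters in the Cohen--Macaulay ring $\mathbb{Z}_2[P]$). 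The dimensions then agree degree by degree, which forces the Serre spectral sequence of $M \hookrightarrow M_T \to BT$ to collapse; that collapse is what ``free over $H^*(BT)$'' really amounts to. This dimension count is the key lemma your sketch leaves implicit, and it is where the simplicity of $P$ enters, as you rightly anticipated.
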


Let $w_i(M)$ and $w(M)$ denote the $i$-th and the total Stiefel-Whitney classes of $M$, respectively. By Corollary 6.8 in \cite{Main}, the total-Stiefel Whitney class of a small cover over $M$ is given by the equation (\ref{eq:total}).   Let $A_j$ denote the $j$-th column vector of $A$. 
Then the first Stiefel-Whitney class of $M$ is given by the following formula
$$w_1(M)=\sum_{i=1}^{m-n} (1+\sum_ja_{ji})\cdot x_{i+n}= \sum_{i=1}^{m-n} (1+ A_i\cdot A_i) \cdot x_{i+n}$$ since $a_{ji}^2=a_{ji}$. Hence $M$ is orientable if and only if $A_i\cdot A_i\equiv 1 \ (\mathrm{mod} \ 2)$ for all $1\leq i \leq m-n$. By comparing the degree $2$-terms in each side of the equation (\ref{eq:total}), one obtains a similar formula for the second Stiefel-Whitney class of $M$.

\begin{proposition} \label{prop:secondSW} The second Stiefel-Whitney class of $M$ is 
	\begin{eqnarray} \label{eq:seconSW} w_2(M)= \sum_{i=1}^{m-n} \alpha_i \cdot x_{i+n}^2 +\sum_{1\leq i<j\leq m-n} \beta_{ij}\cdot x_{i+n} \cdot x_{j+n} \ \quad \qquad \ (\mathrm{mod} \ I)\end{eqnarray} where $\displaystyle \alpha_i=\binom{1+A_i\cdot A_i}{2}$ and 
	$\beta_{ij}=(1+A_i\cdot A_i)(1+A_j\cdot A_j)+A_i\cdot A_j$.
\end{proposition}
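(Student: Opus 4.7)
The plan is to read off the degree-$2$ component of the product formula (\ref{eq:total}) and compare coefficients with those of the claimed expression. Write $w(M)=P\cdot Q$ where $P=\prod_{i=1}^{m-n}(1+x_{n+i})$ and $Q=\prod_{i=1}^{n}(1+L_i)$ with $L_i=\sum_{j=1}^{m-n}a_{ij}x_{n+j}$. Since each $L_i$ and each $x_{n+i}$ is of degree one, the degree-$2$ part of $w(M)$ splits as $P_2+P_1Q_1+Q_2$, where $P_r$ and $Q_r$ denote the degree-$r$ components. The obvious pieces are $P_1=\sum_i x_{n+i}$ and $P_2=\sum_{i<j}x_{n+i}x_{n+j}$, while $Q_1=\sum_{i,j}a_{ij}x_{n+j}=\sum_{j}(A_j\cdot A_j)x_{n+j}$, using $a_{ij}^2=a_{ij}$ to convert row-sums of column $A_j$ into $A_j\cdot A_j\pmod 2$.

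The main computation is $Q_2=\sum_{i_1<i_2}L_{i_1}L_{i_2}$, which I would expand as $\sum_{j,k}\bigl(\sum_{i_1<i_2}a_{i_1,j}a_{i_2,k}\bigr)x_{n+j}x_{n+k}$ and split according to whether $j=k$ or $j\ne k$. On the diagonal, the coefficient of $x_{n+j}^2$ is $\sum_{i_1<i_2}a_{i_1,j}a_{i_2,j}=\binom{A_j\cdot A_j}{2}$, counting unordered pairs among the ones in the column $A_j$. Off the diagonal, for $i<j$, symmetrizing the inner sum yields the coefficient $\sum_{i_1\neq i_2}a_{i_1,i}a_{i_2,j}=\bigl(\sum_k a_{ki}\bigr)\bigl(\sum_\ell a_{\ell j}\bigr)-\sum_k a_{ki}a_{kj}=(A_i\cdot A_i)(A_j\cdot A_j)+A_i\cdot A_j\pmod 2$. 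Likewise, $P_1Q_1$ contributes $A_i\cdot A_i$ to the coefficient of $x_{n+i}^2$, and after collecting the two orderings it contributes $A_i\cdot A_i+A_j\cdot A_j$ to the coefficient of $x_{n+i}x_{n+j}$ for $i<j$.

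Summing all the contributions, the coefficient of $x_{n+i}^2$ becomes $A_i\cdot A_i+\binom{A_i\cdot A_i}{2}$, which equals $\binom{1+A_i\cdot A_i}{2}=\alpha_i$ by the elementary identity $s+\binom{s}{2}=\binom{s+1}{2}$, valid over $\mathbb{Z}$ and hence mod $2$. The coefficient of $x_{n+i}x_{n+j}$ for $i<j$ becomes $1+(A_i\cdot A_i+A_j\cdot A_j)+(A_i\cdot A_i)(A_j\cdot A_j)+A_i\cdot A_j$, which factors as $(1+A_i\cdot A_i)(1+A_j\cdot A_j)+A_i\cdot A_j=\beta_{ij}$. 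Everything is done modulo $I$ throughout, which costs nothing since the $x_{n+j}$ are generators of $\mathbb{Z}_2[P]$. The only genuine point to watch is the symmetrization of $Q_2$ off the diagonal and the binomial identity reconciling the diagonal coefficients with the closed form $\alpha_i=\binom{1+A_i\cdot A_i}{2}$; the rest is straightforward bookkeeping.
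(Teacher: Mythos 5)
Your proof is correct. The overall strategy — read off the degree-$2$ coefficient of equation (\ref{eq:total}) and identify it with $\alpha_i$ and $\beta_{ij}$ — is the same as the paper's, but the bookkeeping is organized differently. The paper isolates the relevant variable(s) by specializing all other $x_{n+k}$ to $0$, then regroups the $n$ factors $\prod_t(1+a_{ti}y_i+a_{tj}y_j)$ according to the pair $(a_{ti},a_{tj})\in\{0,1\}^2$, which converts the whole product into a short closed form, $(1+y)^{A_i\cdot A_i+1}$ in the one-variable case and $(1+y_i)^{A_i\cdot A_i-A_i\cdot A_j+1}(1+y_j)^{A_j\cdot A_j-A_i\cdot A_j+1}(1+y_i+y_j)^{A_i\cdot A_j}$ in the two-variable case, from which the coefficient is read off at a glance. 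You instead expand the degree-$2$ component directly as $P_2+P_1Q_1+Q_2$ and compute each piece by explicit double sums, with the symmetrization $\sum_{i_1\neq i_2}=\sum_{i_1,i_2}-\sum_{i_1=i_2}$ and the identity $s+\binom{s}{2}=\binom{s+1}{2}$ doing the work that the paper's regrouping does. The numerical content is identical and both are elementary; the paper's grouping trick is the one that scales cleanly to $w_3$ and $w_4$ in Section 5, whereas your Cauchy-product expansion is slightly more direct here but would become combinatorially heavier in higher degree.
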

\begin{proof} The coefficient of $x_{i+n}^2$ in the equation (\ref{eq:total}) equals to the coefficient of $y^2$ in $\displaystyle (1+y)\Big(\underset{j}{\prod}(1+a_{ji}y)\Big)$ which is the $(k_i+1)$-th power of $1+y$ where $k_i$ is the number of $1$'s in $A_i$. Since the entries of $A_i$ are either $0$ or $1$, the number of $1$'s in $A_i$ is equal to $A_i \cdot A_i$. Hence the coefficient of $x_{i+n}^2$ in (\ref{eq:total}) is
$\binom{1+A_i \cdot A_i}{2}.$

To find $\beta_{ij}$, first note that $|\{t| \ a_{ti}=a_{tj}=1\}|=A_i \cdot A_j$. Therefore $\beta_{ij}$ is equal to the coefficient of $y_iy_j$ in the product $$(1+y_i)^{(A_i\cdot A_i-A_i\cdot  A_j+1)} (1+y_j)^{(A_j\cdot A_j-A_i\cdot  A_j+1)}(1+y_i+y_j)^{A_{ij}}.$$ Hence we have 
	\begin{eqnarray*}\beta_{ij}&=&(A_i\cdot A_i-A_i\cdot  A_j+1)(A_j \cdot A_j+1)+A_{ij}(A_j \cdot A_j)\\
	&=&	(1+A_i\cdot A_i)(1+A_j\cdot A_j)-A_i\cdot A_j. \end{eqnarray*}
Since we work with $\mathbb{F}_2$ coefficients, the result follows.
\end{proof}
\begin{corollary}\label{cor:spingeneral}
	Let $M$ be a small cover over $P$ with an associated reduced matrix $A$. If $A_i \cdot A_i\equiv 3 \ (\mathrm{mod} \ 4)$ and $A_i \cdot A_j\equiv 0 \ (\mathrm{mod} \ 2)$ for all  possible $ i < j $ then $M$ has a Spin structure.
\end{corollary}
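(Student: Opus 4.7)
The plan is to invoke the characterization that $M$ admits a Spin structure if and only if $w_1(M)=0$ and $w_2(M)=0$, and then simply verify that the arithmetic hypotheses on the columns of $A$ force both of these classes to vanish modulo $2$. Since the closed-form expressions for $w_1(M)$ and $w_2(M)$ are already in hand (the formula displayed just before Proposition \ref{prop:secondSW}, and Proposition \ref{prop:secondSW} itself), the argument reduces to a coefficient-by-coefficient check.

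For $w_1(M)$, I would note that the hypothesis $A_i\cdot A_i\equiv 3\ (\mathrm{mod}\ 4)$ in particular makes $A_i\cdot A_i$ odd, so every coefficient $1+A_i\cdot A_i$ appearing in $w_1(M)=\sum_{i=1}^{m-n}(1+A_i\cdot A_i)\,x_{i+n}$ is even, and $w_1(M)=0$ in $\mathbb{F}_2$-coefficients.

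For $w_2(M)$ I would treat the two families of coefficients from Proposition \ref{prop:secondSW} separately. For the off-diagonal coefficients $\beta_{ij}=(1+A_i\cdot A_i)(1+A_j\cdot A_j)+A_i\cdot A_j$, both factors $1+A_i\cdot A_i$ and $1+A_j\cdot A_j$ are even, so their product is divisible by $4$, while $A_i\cdot A_j\equiv 0\ (\mathrm{mod}\ 2)$ by hypothesis; hence $\beta_{ij}\equiv 0\ (\mathrm{mod}\ 2)$. For the diagonal coefficient $\alpha_i=\binom{1+A_i\cdot A_i}{2}=\tfrac{(1+A_i\cdot A_i)(A_i\cdot A_i)}{2}$, the hypothesis $A_i\cdot A_i\equiv 3\ (\mathrm{mod}\ 4)$ gives $1+A_i\cdot A_i\equiv 0\ (\mathrm{mod}\ 4)$, so $(1+A_i\cdot A_i)/2$ is even and consequently so is $\alpha_i$.

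There is no real obstacle here; the only thing one needs to be slightly careful about is that the congruence $A_i\cdot A_i\equiv 3\ (\mathrm{mod}\ 4)$ is used for two distinct purposes, namely to guarantee the $2$-adic divisibility of $(1+A_i\cdot A_i)/2$ for the $\alpha_i$ check and to force $1+A_i\cdot A_i$ to be even for the $w_1$ and $\beta_{ij}$ checks. Putting the two vanishings together, $w_1(M)=w_2(M)=0$, and the conclusion follows.
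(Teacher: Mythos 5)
Your proposal is correct and follows exactly the route the paper intends: the corollary is stated without proof as an immediate consequence of the formula for $w_1$ and of Proposition~\ref{prop:secondSW}, and your coefficient-by-coefficient check (oddness of $A_i\cdot A_i$ kills $w_1$, divisibility of $1+A_i\cdot A_i$ by $4$ kills $\alpha_i$, and the product of two even numbers plus the even $A_i\cdot A_j$ kills $\beta_{ij}$) is precisely the computation being elided. No discrepancy.
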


\section{Existence of Spin Structure}
\label{sect:SpinStr}

In this section, we give a necessary and sufficient condition for the existence of Spin structure for generalized Bott manifolds. Let $B_k$ be a generalized real Bott manifold given in
(\ref{eq:BGott}). One can realize $B_k$ as a small cover over $P=\underset{i=1}{\overset{k}{\prod}} \Delta^{n_i}$ where $\underset{i=1}{\overset{k}{\sum}} n_i=n$. The facets of $P$ is given by the following set
$$\mathcal{F}=\{F^i_j=\Delta^{n_1} \times \cdots \times \Delta^{n_{i-1}}\times f^i_{j}\times \Delta^{n_{i+1}}\times \cdots \times \Delta^{n_k} | \ 1 \leq i \leq k, \ 0 \leq j \leq n_i \}$$ where $\{f_{0}^i,\dots, f^i_{n_i}\}$ is the set of facets of the simplex $\Delta^{n_i}$. Note that $P$ has $(n+k)$-facets and the intersection $\displaystyle \bigcap_{j \neq 0} F_{j}^i$ is non-empty. Hence $B_k$ can be represented by a $(n\times k)$ matrix $A=[a_{ij}]$ by choosing $F_l=F^i_{j}$ for $l=n_1+\cdots+n_{i-1}+j$ and $1 \leq j \leq n_i$ and $F_l=F^i_0$ for $l=n+i$. Following \cite{Choi, Main}, one can see $A$ as a $(k\times k)$ vector matrix $A=[\mathbf{v}_{ij}]$ where $\mathbf{v}_{ij}\in \mathbb{Z}_2^{n_i}$. Here $\mathbf{v}_{ij}$ is the column vector whose $l$-th entry is $a_{n_1+\cdots+n_{i-1}+l,j}$.

Note that facets in $\mathcal{F} \backslash \{F^1_{j_1},\cdots,F^k_{j_k}\}$ intersect at a vertex for every $0\leq j_i\leq n_i$ and $1\leq i \leq k$. Moreover, a family of facets containing the set $\{F^i_{0},\cdots, F^i_{n_i}\}$ has an empty intersection for any $1\leq i \leq k$. Let $A_{l_1\cdots l_k}$ be a $(k\times k)$ matrix whose $j$-th row is the $l_j$-th row of $A$ for $1\leq l_i \leq n_i$ and $1\leq i \leq k$. In \cite{Main}, using these facts, it is shown that the characteristic function corresponding to $A$ satisfies the non-singularity condition if and only if every principal minor of $A_{l_1\cdots l_k}$ is $1$ for all $1\leq l_i \leq n_i$ and $1\leq i \leq k$. This forces $(\mathbf{v}_{ii})_t=1$ for all $1\leq i \leq k$ and $1\leq t\leq n_i$.

Note that the Stanley-Reisner ring of $P$ is $$ \mathbb{Z}_2[x_{10},\cdots,x_{1n_{1}}, \cdots, x_{k0},\cdots,x_{kn_{k}}]/ I$$ where $I$ is the homogeneous ideal generated by monomial products $x_{i0}\cdots x_{in_{i}}$, $1\leq i \leq k.$ In this notation, $x_{ij}$ corresponds to $x_{n_1+\cdots+n_{i-1}+j}$ when $1\leq j \leq n_{i}$ and to $x_{n+i}$ when $j=0$ in the equation (\ref{eq:total}). Therefore the second Stiefel-Whitney class of $B_k$ is equal to
\begin{eqnarray*}  w_2(M)= \sum_{i=1}^{k} \alpha_i \cdot x_{i0}^2 +\sum_{1\leq i<j\leq k} \beta_{ij}\cdot x_{i0} \cdot x_{j0}\end{eqnarray*} modulo $I$ where $\alpha_i$ and 
$\beta_{ij}$ are as given in Proposition \ref{prop:secondSW}. From now on, we assume that $n_i=1$ for $1\leq i\leq l$ and $n_i>1$, otherwise.  This means that the only relations involving the monomials of degree $2$ are
$$x_{i0}^2=\underset{j\neq i}{\sum} \mathbf{v}_{ij}\cdot x_{i0}\cdot x_{j0}$$ for $1\leq i \leq l$ (here, the vector $\mathbf{v}_{ij} \in \mathbb{Z}_2$ is considered as a scalar) . Therefore we have
\begin{eqnarray}\label{eq:SSWforPS} w_2(M)&=&\sum_{i=l+1}^{k} \alpha_i \cdot x_{i0}^2 
+\sum_{l \leq i<j\leq k } \beta_{ij}\cdot x_{i0} \cdot x_{j0}\\&&+ \sum_{i <j \leq l} (\beta_{ij}+\mathbf{v}_{ij} \cdot \alpha_{i}+\mathbf{v}_{ji} \cdot \alpha_{j}) \cdot x_{i0}\cdot x_{j0} \nonumber \\
&&+\sum_{   i < l+1 \leq j \leq k}(\beta_{ij}+\mathbf{v}_{ij} \cdot \alpha_i) \cdot x_{i0} \cdot x_{j0} \nonumber
\end{eqnarray}

\begin{theorem}\label{theorem:seconSWforPS}
	The generalized real Bott manifold $B_k$ has a Spin structure if and only if the following conditions are satisfied:
	\begin{itemize}
		\item[i)] $A_i \cdot A_i \equiv 1 \ (\mathrm{mod} \ 2)$ when $ i\leq l$ and $A_i \cdot A_i\equiv 3 \ (\mathrm{mod} \ 4)$, otherwise, 
		\item[ii)] $A_i \cdot A_j\equiv 0 \ (\mathrm{mod} \ 2)$ for all $l \leq i < j \leq k$,
		\item[iii)] $A_i\cdot A_j$ and 
		$\dfrac{\mathbf{v}_{ij}\cdot (A_i\cdot A_i+1)+\mathbf{v}_{ji} \cdot (A_j\cdot A_j+1)}{2}$ have the same parity when $1\leq i < j \leq l.$ \\
		\item[iv)] $A_i\cdot A_j$ and $ \dfrac{\mathbf{v}_{ij}\cdot (A_i\cdot A_i+1)}{2}$ have the same parity when $1 \leq i <l+1\leq j \leq k.$ 
	\end{itemize}  
\end{theorem}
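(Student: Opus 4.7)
The plan is to exploit that $B_k$ admits a Spin structure iff $w_1(B_k)=0$ and $w_2(B_k)=0$, then combine the first Stiefel-Whitney criterion from Section \ref{sect:Prelim} with the explicit expansion (\ref{eq:SSWforPS}) of $w_2$.

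First I would recall that $w_1(B_k)=0$ is equivalent to $A_i\cdot A_i\equiv 1\pmod{2}$ for every $1\le i\le k$, and from this point on assume it; this is already the ``$i\le l$'' half of condition (i). Next I would identify a $\mathbb{Z}_2$-basis of $H^2(B_k;\mathbb{Z}_2)$. After using the linear relations to write each $x_{ij}$ ($j\ge 1$) as a $\mathbb{Z}_2$-combination of $x_{10},\dots,x_{k0}$, the cohomology ring is generated by $x_{10},\dots,x_{k0}$, and the only Stanley-Reisner relations living in degree $2$ come from $x_{i0}x_{i1}=0$ with $i\le l$, which yield
\[
x_{i0}^{2}=\sum_{j\ne i}\mathbf{v}_{ij}\,x_{i0}x_{j0},\qquad i\le l.
\]
The remaining Stanley-Reisner relations $x_{i0}x_{i1}\cdots x_{in_i}=0$ with $i>l$ have degree $\ge 3$ and contribute nothing in degree $2$. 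Hence $\{x_{i0}^{2}:l<i\le k\}\cup\{x_{i0}x_{j0}:1\le i<j\le k\}$ is a $\mathbb{Z}_2$-basis of $H^{2}(B_k;\mathbb{Z}_2)$, and formula (\ref{eq:SSWforPS}) is already written in that basis.

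Equating each coefficient to zero then yields four families of congruences modulo $2$:
\begin{align*}
\alpha_i&\equiv 0 &&\text{for } l<i\le k,\\
\beta_{ij}&\equiv 0 &&\text{for } l<i<j\le k,\\
\beta_{ij}+\mathbf{v}_{ij}\alpha_i+\mathbf{v}_{ji}\alpha_j&\equiv 0 &&\text{for } 1\le i<j\le l,\\
\beta_{ij}+\mathbf{v}_{ij}\alpha_i&\equiv 0 &&\text{for } 1\le i\le l<j\le k.
\end{align*}
The last step translates these to the parity conditions (i)--(iv). Since $A_i\cdot A_i$ is odd, both $1+A_i\cdot A_i$ and $1+A_j\cdot A_j$ are even, so $(1+A_i\cdot A_i)(1+A_j\cdot A_j)\equiv 0\pmod{4}$ and consequently $\beta_{ij}\equiv A_i\cdot A_j\pmod{2}$. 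Writing $\alpha_i=\binom{1+A_i\cdot A_i}{2}=\frac{(A_i\cdot A_i+1)(A_i\cdot A_i)}{2}$ and using $A_i\cdot A_i\equiv 1\pmod{2}$, we get $\alpha_i\equiv\frac{A_i\cdot A_i+1}{2}\pmod{2}$. Substituting these identifications reproduces (i)--(iv); in particular, ``$\alpha_i\equiv 0$ together with $A_i\cdot A_i$ odd'' is exactly $A_i\cdot A_i\equiv 3\pmod{4}$, i.e.\ the ``$i>l$'' half of (i).

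The main obstacle I expect is the basis step: one must verify that the listed monomials really are linearly independent in $H^{2}(B_k;\mathbb{Z}_2)$. This boils down to noting that the degree-$2$ Stanley-Reisner relations occur only at the ``short'' simplices $i\le l$, each eliminating exactly one square $x_{i0}^{2}$, so that the surviving squares together with all $x_{i0}x_{j0}$ ($i<j$) remain independent. Beyond that, the argument is bookkeeping, with the one point of care being to perform the $\binom{1+A_i\cdot A_i}{2}$ reduction as a parity statement about an integer, so that the halved expressions appearing in (iii) and (iv) are literally integers rather than just symbols in $\mathbb{Z}_2$.
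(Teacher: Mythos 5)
Your proof follows the same route as the paper: reduce to $w_1=w_2=0$, rewrite $w_2$ in the monomial basis of $H^2(B_k;\mathbb{Z}_2)$ using the degree-two Stanley--Reisner relations coming from the one-dimensional factors, and translate the vanishing of each coefficient into the stated parity conditions via $\beta_{ij}\equiv A_i\cdot A_j \pmod 2$ and $\alpha_i\equiv\tfrac{A_i\cdot A_i+1}{2}\pmod 2$. You are in fact slightly more explicit than the paper in justifying that $\{x_{i0}^2 : i>l\}\cup\{x_{i0}x_{j0} : i<j\}$ is a basis of $H^2$ (so that setting each coefficient to zero is legitimate) and in keeping the index ranges for the four coefficient families disjoint, both points the paper leaves implicit.
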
 
\begin{proof} The manifold $B_k$ has a Spin structure if and only if it is orientable and $w_2$ vanishes. Recall that the manifold $B_k$ is orientable if and only if $A_i \cdot A_i$ is congruent to $1$ modulo $2$. In this case, $\beta_{ij} \equiv A_i \cdot A_j$ modulo $2$. Then the theorem follows from the equation (\ref{eq:SSWforPS}) and the fact that $\binom{1+A_j \cdot A_j}{2}$ have the same parity with $\frac{A_i \cdot A_j +1}{2}$.
\end{proof}
It is well-known that the vector matrix $A$ is equivalent to an upper triangular one in which the entries of the diagonal vectors are all 1 via conjugation by a permutation matrix \cite[Lemma 5.1]{Main}. Under this assumption, the above theorem is equivalent to the \cite[Theorem 4.7]{DsouzaUma}. In \cite[Theorem 4.7]{DsouzaUma}, the ordering in the product is chosen so that the last $k-l$ of the simplices have dimension $1$. Moreover, $T_s$ and $T_{rs}$ in \cite[Theorem 4.7]{DsouzaUma} are equivalent to $\binom{A_s\cdot A_s}{2}$ and $A_r \cdot A_s$, respectively and the orientability condition is equivalent to $A_s\cdot A_s \equiv 1 \ (\mathrm{mod} \ 2)$. 


By Theorem \ref{theorem:seconSWforPS}, it follows that the converse of Corollary \ref{cor:spingeneral} is also true when $l=0$.
\begin{corollary}\label{cor:seconSWforPS}
	The generalized real Bott manifold with $l=0$ has a Spin structure if and only if $A_i \cdot A_i\equiv 3 \ (\mathrm{mod} \ 4)$ and $A_i \cdot A_j\equiv 0 \ (\mathrm{mod} \ 2)$ for all $1\leq i < j \leq k$ where $A$ is the reduced matrix.   
\end{corollary}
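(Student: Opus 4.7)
The plan is to derive this corollary as a direct specialization of Theorem \ref{theorem:seconSWforPS} to the case $l = 0$, together with the ``if'' direction already furnished by Corollary \ref{cor:spingeneral}. Recall that $l$ was defined as the largest index such that $n_i = 1$ for all $i \leq l$ (after reordering). The hypothesis $l = 0$ therefore means every factor of the base polytope $P = \prod_i \Delta^{n_i}$ is a simplex of dimension at least $2$, so no scalar relation $x_{i0}^2 = \sum_{j \neq i} \mathbf{v}_{ij}\cdot x_{i0} x_{j0}$ shows up when computing $w_2$ modulo $I$.

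First I would handle the sufficient direction: with $l = 0$, the hypotheses $A_i\cdot A_i \equiv 3 \pmod 4$ and $A_i \cdot A_j \equiv 0 \pmod 2$ are exactly the assumptions of Corollary \ref{cor:spingeneral}, which already yields a Spin structure on $M$. So the ``if'' half is immediate.

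For the necessary direction, I would invoke Theorem \ref{theorem:seconSWforPS} and examine each of its four clauses under $l = 0$. Clause (i) splits according to whether $i \leq l$ or not; since $l = 0$ the first case is vacuous and what remains is exactly $A_i \cdot A_i \equiv 3 \pmod 4$ for every $i$. Clause (ii), as stated, requires $A_i \cdot A_j \equiv 0 \pmod 2$ for $l \leq i < j \leq k$, which with $l = 0$ becomes the same condition for all $1 \leq i < j \leq k$. Finally, clauses (iii) and (iv) range respectively over pairs with $i < j \leq l$ and pairs with $i < l + 1 \leq j$, both of which are empty index sets when $l = 0$; thus they impose no constraint.

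There is no real obstacle here, since the corollary is essentially the contentless specialization of a previously proved theorem; the only thing to verify carefully is that clauses (iii) and (iv) are truly vacuous and that clause (ii)'s range $l \leq i < j$ collapses to the claimed $1 \leq i < j$. Once these bookkeeping points are checked the two directions combine to give the stated equivalence.
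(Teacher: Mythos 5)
Your proposal is correct and takes essentially the same approach as the paper: the paper simply remarks that the corollary follows by specializing Theorem \ref{theorem:seconSWforPS} to $l=0$, and your careful check that clauses (i)--(iv) reduce to the two stated conditions (with (iii), (iv) vacuous) is exactly the intended argument.
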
 

\begin{example}
Let $P=\Delta^2\times \Delta^3\times \Delta^5$ and $B$ be a $3$-step generalized Bott manifold corresponding to the reduced matrix 
\[A=
\left[ 
\begin{array}{c|c|c} 
	1 & 0 & 0 \\
	1 & 0 & 0 \\
	\hline
	0 & 1 & 1\\
	1 & 1 &1\\
	1 & 1 &0\\
	\hline
	1 & 0 &1\\
	1 & 0 &1\\
	1 & 0 &1\\
	0 & 0 &1\\
	0 & 0 &1
\end{array} 
\right].  \] Then $B$ has a Spin structure by the above Corollary.
\end{example}
The following corollary also follows from the Proposition 5.1 of \cite{Shen}.
\begin{corollary} If a generalized real Bott manifold over $P=\underset{t=1}{\overset{k}{\prod}} \Delta^{n_t}$ with $l=0$ admits a Spin structure then $n_j \equiv 3$ (mod $4$) for some $j$.
	\begin{proof} Let $P=\underset{t=1}{\overset{k}{\prod}} \Delta^{n_t}$ and $M$ be a small cover over $P$ with an associated vector matrix $A$. If $B$ is a vector matrix obtained by conjugating $A$ via permutation matrix $P_{\sigma}$ then $A_i\cdot A_i=B_{\sigma(i)} \cdot B_{\sigma(i)}$ and $A_i \cdot A_j= B_{\sigma(i)}\cdot B_{\sigma(j)}$. Therefore, we can assume that $A$ is an upper triangular vector matrix in which the entries of the diagonal vectors are all 1. Then we have $A_1\cdot A_1=n_1$. So if $M$ has a Spin structure, $n_1 \equiv 3$ (mod $4$). 		
	\end{proof}
\end{corollary}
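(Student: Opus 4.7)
The plan is to reduce to a convenient normal form for the vector matrix $A$ and then read off the conclusion directly from Corollary \ref{cor:seconSWforPS}. The key observation is that conjugation by a permutation matrix $P_\sigma$ preserves all pairwise dot products between column vectors of $A$: if $B=P_\sigma A P_\sigma^{-1}$, then the $\sigma(i)$-th column of $B$ is obtained from the $i$-th column of $A$ by permuting its rows, which does not change the number of nonzero entries, nor the size of the intersection of the supports of any two columns. Hence $B_{\sigma(i)}\cdot B_{\sigma(j)} = A_i\cdot A_j$ for all $i,j$.

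Next, I would invoke \cite[Lemma 5.1]{Main}, which states that $A$ can be conjugated by a permutation matrix into an upper triangular vector matrix whose diagonal vectors $\mathbf{v}_{ii}$ are the all-ones vectors of length $n_i$. Under this conjugation, the factors of $P=\prod_{t=1}^k \Delta^{n_t}$ are simply relabelled: the permutation $\sigma$ reorders the indices $n_t$. So, up to replacing the original labelling of factors by $\sigma$, we may assume $A$ itself is in this normal form. In the upper triangular form, the first column $A_1$ of $A$ (as an $n\times k$ matrix) has its $\mathbf{v}_{11}$-block equal to the all-ones vector of length $n_1$ and all lower blocks equal to zero. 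Therefore $A_1\cdot A_1 = n_1$.

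Finally, since $l=0$ (all $n_t\geq 2$), Corollary \ref{cor:seconSWforPS} applies and gives that Spin structure forces $A_i\cdot A_i \equiv 3 \pmod 4$ for every $i$, in particular for $i=1$. Thus $n_1\equiv 3\pmod 4$, and after undoing the permutation this reads: there exists some $j$ with $n_j\equiv 3\pmod 4$. There is no serious obstacle; the only thing to be careful about is that the reduction to upper triangular form does genuinely correspond to relabelling the factors of the product of simplices (so that the diagonal block sizes after conjugation are a permutation of the original $n_t$), which is built into the statement of \cite[Lemma 5.1]{Main}.
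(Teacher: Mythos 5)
Your proof takes essentially the same route as the paper's: observe that conjugation by a permutation matrix preserves all column dot products, reduce via \cite[Lemma 5.1]{Main} to an upper triangular vector matrix with all-ones diagonal blocks, compute $A_1\cdot A_1 = n_1$, and then read off $n_1\equiv 3 \pmod 4$ from Corollary \ref{cor:seconSWforPS}. The only differences are cosmetic: you spell out why the dot products are invariant (block-wise row reshuffling), which the paper states without comment, and you make explicit that the conjugation corresponds to a relabelling of the factors of the product of simplices.
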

When $l=k$, we have the following result.
\begin{corollary}\label{cor:1dimensionals}
	The real Bott manifold $B_k$ has a Spin structure if and only if 
	\begin{itemize}
		\item[i)] $A_i \cdot A_i \equiv 1 \ (\mathrm{mod} \ 2)$, $1\leq i \leq k$,
		\item[ii)] $A_i\cdot A_j$ and $\dfrac{\mathbf{v}_{ij}\cdot (A_i\cdot A_i+1)+\mathbf{v}_{ji} \cdot (A_j\cdot A_j+1)}{2}$ have the same parity when $1 \leq i < j \leq l.$ \\
	\end{itemize}
\end{corollary}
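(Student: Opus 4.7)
The plan is to obtain Corollary \ref{cor:1dimensionals} as a direct specialization of Theorem \ref{theorem:seconSWforPS} to the case $l = k$. Recall that $l$ was defined as the number of indices for which $n_i = 1$; a classical real Bott manifold is by definition one where every $B_i$ is the projectivization of two line bundles, i.e.\ $n_i = 1$ for all $i$, which forces $l = k$. Under this substitution I would simply inspect each of the four clauses of Theorem \ref{theorem:seconSWforPS} in turn and see what survives.

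First, I would check that clauses (ii) and (iv) of Theorem \ref{theorem:seconSWforPS} are vacuous when $l=k$. Clause (ii) quantifies over $l \le i < j \le k$, i.e.\ $k \le i < j \le k$, and clause (iv) quantifies over $1 \le i < l+1 \le j \le k$, i.e.\ over $j \ge k+1$ with $j \le k$. Both index ranges are empty, so no condition is imposed on $B_k$ from these two parts.

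Second, I would reduce clause (i): since no index $i$ satisfies $i > l = k$, the mod $4$ alternative $A_i \cdot A_i \equiv 3 \pmod 4$ is never invoked, and clause (i) collapses to $A_i \cdot A_i \equiv 1 \pmod 2$ for $1 \le i \le k$, which is exactly condition (i) of the corollary. Finally, clause (iii), ranging over $1 \le i < j \le l = k$, transcribes verbatim to condition (ii) of the corollary.

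There is no real obstacle to this argument; the only thing to be careful about is correctly identifying the index ranges after setting $l=k$, so that the "otherwise" half of Theorem \ref{theorem:seconSWforPS}(i) and the mixed-type conditions (ii), (iv) are indeed empty. Since Theorem \ref{theorem:seconSWforPS} already packages the equivalence between (orientability $+$ vanishing of $w_2$) and the four listed conditions, the corollary follows at once from the above bookkeeping.
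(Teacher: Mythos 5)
Your proposal is correct and matches the paper's intent exactly: the paper introduces the corollary with ``When $l=k$, we have the following result'' and omits a written proof, relying on precisely the bookkeeping you carry out (clauses (ii) and (iv) of Theorem~\ref{theorem:seconSWforPS} become vacuous, clause (i) loses its mod~$4$ alternative, and clause (iii) survives unchanged). No gap.
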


The above corollary is equivalent to  Theorem 3.2 in \cite{DsouzaUma2} where $A$ is assumed to be upper-triangular. In particular, Theorem 1.2 in \cite{Gasior1} directly follows from the corollary. 
\begin{example}
	Let $P=I \times \Delta^2 \times \Delta^2$. Then a small cover $B_3$ over $P$ corresponds to a vector  matrix 
	\[A=
	\left[ 
	\begin{array}{c|c|c} 
	1 & a_{12} &a_{13}\\
	\hline
	a_{21} & 1 &a_{23}\\
	a_{31} & 1 &a_{33}\\
	\hline
	a_{41} & a_{42} &1\\
	a_{51} & a_{52} &1
	\end{array} 
	\right].  \]
	If $B_3$ has a Spin structure then $a_{12}+a_{42}+a_{52}=1$ and $a_{13}+a_{23}+a_{33}=1$ by part i) of Theorem \ref{theorem:seconSWforPS} and $a_{12}a_{13}+a_{23}+a_{33}+a_{42}+a_{52}\equiv 0$ (mod $2$) by part ii) of Theorem \ref{theorem:seconSWforPS}. By substituting the first two equations to the last one, we get $a_{12}=a_{13}=0$ and hence $a_{42}+a_{52}=a_{23}+a_{33}=1$. On the other hand, at least one of the vectors $\displaystyle \begin{pmatrix}
	a_{42}  \\ 	a_{52}	\end{pmatrix}$ and $\begin{pmatrix}	a_{23}  \\ 	a_{33}
	\end{pmatrix}$ must be zero by the non-singularity condition. Hence there is no small cover over $I \times \Delta^2 \times \Delta^2$ with a Spin structure when $n\geq 2$.
\end{example}

It is well-known that when $n_i$'s are all even, there is no orientable small cover over $P$ \cite{AltunbulakGuclukanIlhan}. Hence small covers over $P$ have no Spin structures when all the $n_i$'s are even. In the next section, we generalize the above example to have a non-existence result for every small cover over $P=I\times \Delta^{2n_1} \times \cdots \times \Delta^{2n_k}$ for $k \geq 2$. When $k=1$, a small cover over $I\times \Delta^{4t}$ does not have a Spin structure since $A_2 \cdot A_2$ is either $4t$ or $4t+1$. However, the small cover over $P=I\times \Delta^{4t+2}$ corresponding to a characteristic function $\lambda$ which sends $F^1_0$ to $e_1$ and $F^2_0$ to $e_1+e_2+\cdots+e_{4t+3}$ has a Spin structure.

Given a  dimension function $\omega: \{1,2,\dots,n\} \rightarrow \mathbb{N}$, let $I_{\omega}$ be the identity vector matrix associated to $\omega$, i.e, the $(i,j)$-entry of $I_{\omega}$ is $1$ when $\omega(1)+\cdots+\omega(j-1)+1\leq i \leq \omega(1)+\cdots+\omega(j)$, and $0$, otherwise. 
To generalize Theorem 1.2 in \cite{Gasior1} to our case,  we  denote the matrix $A-I_{\omega}$ where $\omega(i)=n_i$ by $B$. 
\begin{proposition}\label{prop:Gasior1} The generalized real Bott manifold with an associated matrix $B$ has a Spin structure if and only if for all $1 \leq i < j \leq k$, the generalized Bott manifold corresponding to $B_{ij}$ has a Spin structure, where $B_{ij}$ is the vector matrix whose $l$-th column is $B_l$ if $l=i,j$ and $0$, otherwise.   
\end{proposition}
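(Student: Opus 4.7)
The plan is to reduce the biconditional to an arithmetic statement via Theorem~\ref{theorem:seconSWforPS}, applied once to $A := B + I_{\omega}$ and once to each elementary component $A_{ij} := B_{ij} + I_{\omega}$ for $1 \le i < j \le k$. By construction the columns of $A_{ij}$ are
\[(A_{ij})_i = A_i, \qquad (A_{ij})_j = A_j, \qquad (A_{ij})_l = c_l \text{ for } l \neq i, j,\]
where $c_l$ denotes the $l$-th column of $I_{\omega}$, supported on the $l$-th block with all entries equal to $1$.

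First I would compute the self- and cross-dot products of the columns of $A_{ij}$. The crucial identities
\[(A_{ij})_i \cdot (A_{ij})_i = A_i \cdot A_i, \quad (A_{ij})_j \cdot (A_{ij})_j = A_j \cdot A_j, \quad (A_{ij})_i \cdot (A_{ij})_j = A_i \cdot A_j\]
ensure that the column-$i$, column-$j$ and pair-$(i,j)$ conditions of Theorem~\ref{theorem:seconSWforPS} applied to $A_{ij}$ coincide verbatim with those applied to $A$. For $l \notin \{i, j\}$ one has $c_l \cdot c_m = 0$ when $m \neq l$, $c_l \cdot c_l = n_l$, and $c_l \cdot (A_{ij})_s = |\mathbf{v}_{ls}|$ for $s \in \{i, j\}$; moreover the blocks $\mathbf{v}_{lm}$ of $A_{ij}$ vanish whenever $l, m \notin \{i, j\}$ are distinct. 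These identities let me rewrite all remaining conditions of Theorem~\ref{theorem:seconSWforPS} for $A_{ij}$ entirely in terms of data read off from $A$.

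With these ingredients in place, I would prove both implications. For $(\Rightarrow)$, Spin of $M(A)$ immediately transfers the column and pair-$(i,j)$ conditions to $A_{ij}$; the residual conditions of Theorem~\ref{theorem:seconSWforPS} for $A_{ij}$ that involve a third index $l$ reduce, via the identities above, to parity conditions on $n_l$ and on the weights $|\mathbf{v}_{li}|, |\mathbf{v}_{lj}|$. I would verify these using the identity $A_s \cdot A_s = n_s + \sum_{t \neq s} |\mathbf{v}_{ts}|$ together with the orientability of $M(A)$. For $(\Leftarrow)$, the column condition at an arbitrary $t$ is extracted from the Spin structure of any $M(A_{tj})$ with $j \neq t$, and the pair condition at each $(i, j)$ is extracted directly from Spin of $M(A_{ij})$; together these give all of (i)--(iv) of Theorem~\ref{theorem:seconSWforPS} for $A$.

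The hard part will be the case analysis for the residual pair conditions in $A_{ij}$ of the form $(s, l)$ with $s \in \{i, j\}$ and $l \notin \{i, j\}$: according to (iii)--(iv) of Theorem~\ref{theorem:seconSWforPS} these depend on whether $s$ and $l$ fall in the $n=1$ range or the $n>1$ range of the polytope, and a uniform verification requires combining the self-product identity above with the pair conditions already imposed by Spin of the elementary component $M(A_{sl})$. Once this parity bookkeeping is completed, the equivalence follows.
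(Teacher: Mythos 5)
Your backward direction is sound: Spin of $M(A_{ij})$ gives the column conditions of Theorem~\ref{theorem:seconSWforPS} at $i$ and $j$ and the pair condition at $(i,j)$ for $A$, and ranging over all $i<j$ recovers (i)--(iv) for $A$.

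The forward direction has a genuine gap at exactly the ``residual'' step you flag. For a spectator index $l\notin\{i,j\}$ with $n_l>1$, condition (i) of Theorem~\ref{theorem:seconSWforPS} applied to $A_{ij}$ requires $(A_{ij})_l\cdot(A_{ij})_l = c_l\cdot c_l = n_l\equiv 3\pmod 4$. You propose to deduce this from $A_l\cdot A_l = n_l+\sum_{t\neq l}|\mathbf{v}_{tl}|$ together with Spin of $M(A)$, but Spin of $M(A)$ gives only $A_l\cdot A_l\equiv 3\pmod 4$, which constrains the \emph{sum}, not $n_l$ alone. This is not repairable by bookkeeping: take $P=\Delta^3\times\Delta^5\times\Delta^3$ and $A$ upper triangular with $\mathbf{v}_{12}=(1,1,0)^T$ and every other off-diagonal block zero. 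Then $A_1\cdot A_1=3$, $A_2\cdot A_2=7$, $A_3\cdot A_3=3$, $A_1\cdot A_2=2$, $A_1\cdot A_3=A_2\cdot A_3=0$, so $M(A)$ is Spin by Corollary~\ref{cor:seconSWforPS}; yet $A_{13}=I_\omega$, so $M(A_{13})\cong\mathbb{RP}^3\times\mathbb{RP}^5\times\mathbb{RP}^3$ is not Spin because $\binom{6}{2}=15$ is odd. So the ``only if'' implication fails as stated, and your verification of the residual conditions cannot go through as written. Some hypothesis forcing $n_t\equiv 3\pmod 4$ (or $n_t=1$) for every $t$ is needed before the spectator column conditions (and the analogous spectator pair conditions $c_l\cdot A_s=|\mathbf{v}_{ls}|$ for $s\in\{i,j\}$) can be drawn from Spin of $M(A)$; since the paper supplies no proof, it is worth recording this obstruction explicitly rather than asserting the reduction.
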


\section{$\omega$-weighted digraph interpretation}
\label{sect:digraphpart}
In \cite{Choi}, Choi shows that there is a bijection between the set of real Bott manifolds and  acyclic digraphs with $n$-labeled vertices which sends $B_k$ to a graph whose adjacency matrix is $A-I_k$. In \cite[Theorem 4.5]{DsouzaUma2}, Dsouza and Uma give an interpretation of existence of a Spin structure for real Bott manifolds in terms of associated digraphs.  In this section, we generalize \cite[Theorem 4.5]{DsouzaUma2} to small covers over a product of simplices.

\begin{definition}  Given a dimension function $\omega: V \rightarrow \mathbb{N}$, a digraph with vertex set $V$ is called $\omega$-vector weighted if every edge $(u,v)$ is assigned a non-zero vector $\mathbf{w(u,v)}$ in $\mathbb{Z}_2^{\omega(u)}$. 
\end{definition}

Let $G$ be a $\omega$-vector weighted digraph. For convenience, we take the weight of $(u,v)$ to be the zero vector in $\mathbb{Z}_2^{\omega(u)}$ when there is no edge from $u$ to $v$. If $(u,v)$ is an edge of $G$ then $u$ is called an in-neighbor of $v$ and $v$ is called an out-neighbor of $u$. Let $N^{-}_G(v)$ and $N^{+}_G(v)$ denote the set of in-neighbors and out-neighbors of $v$ in $G$. We define in-degree $\mathrm{deg}^{-}(v)$ and out degree $\mathrm{deg}^{+}(v)$ of $v$ as follows:
\begin{eqnarray*} \mathrm{deg}^{-}(v)&=&\sum_{u \in N^{-}_G(v)} \mathbf{w(u,v)} \cdot \mathbf{w(u,v)} \\ \mathrm{deg}^{+}(v)&=&\sum_{z \in N^{+}_G(v)} \mathbf{w(v,z)} \cdot \mathbf{w(v,z)}.\end{eqnarray*}

We can consider a digraph as an $\omega$-weighted digraph with $\omega(i)=1$ for each $i$. In this case, the notion of in-degree and out-degree of a vertex of a $\omega$-weighted digraph agrees with those of digraphs. An adjacency matrix $A_{\omega}(G)$ of an $\omega$-weighted digraph $G$ with labeled vertices $v_1,\cdots, v_n$ is defined to be an $(n \times n)$ $\omega$-vector matrix whose $(i,j)$-th entry is $\mathbf{w(v_i,v_j)}$. An $\omega$-vector weighted digraph is called acyclic if it does not contain any directed cycle.  

As shown in \cite{GuclukanIlhanGurbuzer}, there is a one to one correspondence between the set of small covers over the product $P=\Delta^{n_1}\times \cdots \Delta^{n_k}$ and the set of acylic $\omega$-weighted digraphs where $\omega:\{v_1,\cdots,v_k\} \to \mathbb{N}$ is defined by $\omega(v_i)=n_i$. The correspondence is obtained by sending a small cover with an associated matrix $A$ to a $\omega$-weighted digraph whose adjacency matrix is $A-I_{\omega}.$ For a given small cover $B$ over $P$, we denote the associated acyclic $\omega$-weighted digraph by $D_B$. Recall that the dot product of a vector $\mathbf{v}$ over $\mathbb{Z}_2$ with itself is equal to the number of non-zero coordinates of $\mathbf{v}$. Therefore  $A_i \cdot A_j$ is equal to $A_{\omega}(D_B)_i \cdot A_{\omega}(D_B)_i+\omega(i)$ when $i=j$ and $A_{\omega}(D_B)_i \cdot (A_{\omega}D_B)_j+ \mathbf{w(v_i,v_j)}\cdot \mathbf{w(v_i,v_j)}+\mathbf{w(v_j,v_i)}\cdot \mathbf{w(v_j,v_i)},$ otherwise. Moreover $A_{\omega}(D_B)_i \cdot A_{\omega}(D_B)_i$ is equal to $ \mathrm{deg}^{-}(v_i)$. Let $M_{ij}$ be the sum of $\omega(u,v_i) \cdot \omega(u,v_j)$ where $u$ runs in the set of in-neighbor of both $v_i$ and $v_j$. Then $A_{\omega}(D_B)_i\cdot A_{\omega}(D_B)_j=M_{ij}$. 

\begin{theorem}  \label{thm:SpinDigraph}
	The generalized real Bott manifold $B$ with associated $w$-weighted digraph $D_B$ has a Spin structure if and only if the following conditions are satisfied:
	\begin{itemize}
		\item[i)] Indegree of a vertex $v$ of $D_B$ is even if $\omega(v)=1$ and is congruent to $-\omega(v)+3$ modulo $4$, otherwise,
		\item[ii)] $M_{ij}$ is even if $v_i$ is neither in-neighbour nor out-neighbour $v_j$ with $i \neq j$,
		\item[iii)] $M_{ij}$ and $\displaystyle \dfrac{\mathbf{w(v_i,v_j)} \cdot \mathrm{deg}^{-}(v_i)}{2}$ have the same parity when $v_i$ is an in-neighbor of $v_j$ with $\omega(v_i)=1$, \\
		\item[iv)] $M_{ij}$ and $\displaystyle \mathbf{w(v_i,v_j) }\cdot \mathbf{w(v_i,v_j) }$ have the same parity when $v_i$ is an in-neighbor of $v_j$ with $\omega(v_i)>1 $.
	\end{itemize}
\end{theorem}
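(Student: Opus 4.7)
The plan is to derive Theorem \ref{thm:SpinDigraph} as a direct reformulation of Theorem \ref{theorem:seconSWforPS}, translating each of its four conditions into the intrinsic language of the weighted digraph $D_B$. The dictionary is already essentially assembled in the paragraph preceding the statement, namely the identities
\begin{align*}
A_i \cdot A_i &= \deg^{-}(v_i) + \omega(v_i), \\
A_i \cdot A_j &= M_{ij} + \mathbf{w}(v_i,v_j) \cdot \mathbf{w}(v_i,v_j) + \mathbf{w}(v_j,v_i) \cdot \mathbf{w}(v_j,v_i) \quad (i \neq j),
\end{align*}
together with the observation that acyclicity of $D_B$ forces at most one of $\mathbf{w}(v_i,v_j)$ and $\mathbf{w}(v_j,v_i)$ to be nonzero for each pair $\{v_i,v_j\}$.

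First, condition (i) of the target theorem is immediate after substituting $A_i \cdot A_i = \deg^{-}(v_i) + \omega(v_i)$ into condition (i) of Theorem \ref{theorem:seconSWforPS}: the case $\omega(v_i) = 1$ becomes ``$\deg^{-}(v_i)$ even'', and the case $\omega(v_i) > 1$ rearranges to $\deg^{-}(v_i) \equiv -\omega(v_i) + 3 \pmod 4$.

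For conditions (ii)--(iv), I would proceed by case analysis on each unordered pair $\{v_i,v_j\}$ of distinct vertices. If no edge joins them, then $A_i \cdot A_j$ collapses to $M_{ij}$, and whichever of conditions (ii)--(iv) of Theorem \ref{theorem:seconSWforPS} applies (determined by $\omega(v_i), \omega(v_j)$) reduces to $M_{ij}$ being even, matching condition (ii) of the digraph theorem. If there is a unique edge, say $v_i \to v_j$, then $A_i \cdot A_j = M_{ij} + \mathbf{w}(v_i,v_j) \cdot \mathbf{w}(v_i,v_j)$; substituting into the appropriate case of Theorem \ref{theorem:seconSWforPS} and simplifying (using $A_i \cdot A_i = \deg^{-}(v_i) + 1$ when $\omega(v_i) = 1$) delivers condition (iii) when the in-neighbor has $\omega = 1$ and condition (iv) when it has $\omega > 1$. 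A symmetric analysis covers the edge $v_j \to v_i$.

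The main bookkeeping obstacle will be the indexing convention of Theorem \ref{theorem:seconSWforPS}, whose three pairwise conditions are partitioned over the ordered ranges ``$i < j \leq l$'', ``$i \leq l < j$'', and ``$l < i < j$'', while the digraph conditions are symmetric in the endpoints of an edge. One must verify case by case that every oriented edge of $D_B$, regardless of the relative magnitudes of the labels at its endpoints, is matched by exactly the case of Theorem \ref{theorem:seconSWforPS} producing the claimed digraph formula. A subsidiary point is that the fractions $\tfrac{1}{2}(\cdots)$ in parts (iii)--(iv) of Theorem \ref{theorem:seconSWforPS} are integer-valued precisely because condition (i) of the digraph theorem---established first---forces $\deg^{-}(v_i)$ to be even whenever $\omega(v_i) = 1$. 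Once these points are checked, the equivalence of the two theorems is a routine verification.
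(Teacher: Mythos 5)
Your proposal is correct and takes essentially the same route as the paper: case analysis on each pair $\{v_i,v_j\}$ according to whether an edge joins them, then substituting the dictionary identities $A_i\cdot A_i=\deg^{-}(v_i)+\omega(v_i)$ and $A_i\cdot A_j=M_{ij}+\mathbf{w}(v_i,v_j)\cdot\mathbf{w}(v_i,v_j)+\mathbf{w}(v_j,v_i)\cdot\mathbf{w}(v_j,v_i)$ into the relevant condition of Theorem~\ref{theorem:seconSWforPS}, with acyclicity killing one of the two weight terms. Your remark about reconciling the ordered-index ranges of Theorem~\ref{theorem:seconSWforPS} with the edge-orientation phrasing of the digraph theorem is exactly the bookkeeping the paper's proof silently performs.
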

\begin{proof}
	If $v_i$ is neither in-neighbour nor out-neighbour $v_j$, the condition iii) and iv) of Theorem \ref{theorem:seconSWforPS} is equivalent to the statement that $A_i \cdot A_j$ is even for $i \neq j$.  In this case, we also have $M_{ij}=A_i \cdot A_j$. Otherwise, either $v_i$ or $v_j$ is an in-neighbour of the other one. Since $M_{ij}=M_{ji}$, without loss of generality, we can assume that $v_i$ is. Then $A_i\cdot A_j= M_{ij}+ \mathbf{w(v_i,v_j)}\cdot \mathbf{w(v_i,v_j)}$. Therefore, when $\omega(v_i)=1$, combaining the conditions iii) and iv) of Theorem \ref{theorem:seconSWforPS}, one obtains condition iii) above. When $\omega(v_i)>1$, iv) can be obtained by combaining part ii) and iv) of Theorem \ref{theorem:seconSWforPS}.   
	
\end{proof}

\begin{example} Let $P=\Delta^2\times \Delta^3\times \Delta^3\times \Delta^3$ and $B$ be a $4$-step generalized Bott manifold corresponding to the reduced matrix 
	\[A=
	\left[ 
	\begin{array}{c|c|c|c} 
	1 & 1 & 0 & 1\\
	1 & 0 & 0 & 1\\
	\hline
	0 & 1 & 0&0\\
	0 & 1 &0&0\\
	0 & 1 &0&0\\
	\hline
	0 & 0 &1&0\\
	0 & 0 &1&0\\
	0 & 0 &1&0\\
	\hline
	0 & 1 &1&1\\
	0 & 1 &0&1\\
	0 & 1 &1&1
	\end{array} 
	\right].  \]
	Then $\omega:\{1,2,3,4\} \rightarrow \mathbb{N}$ with $\omega(1)=2$, $\omega(2)=\omega(3)=\omega(4)=3$ and an $\omega$-weighted digraph corresponding to $B$ is as given below.
	\begin{figure}[h]
		\centering
		\begin{tikzpicture}[
		> = stealth, 
		shorten > = 1pt, 
		auto,
		node distance = 3cm, 
		semithick 
		]
		
		\tikzstyle{every state}=[
		draw = black,
		thick,
		fill = white,
		minimum size = 3mm
		]
		
		\node[state] (v1) {$v_1$};
		\node[state] (v2) [above right of=v1] {$v_2$};
		\node (v) [right of=v1]{} ;
		\node[state] (v3) [above right of=v] {$v_3$};
		\node[state] (v4) [right of=v] {$v_4$};
		\node(a) at (3,-1) {$G$} ;
		
		\path[->] (v1) edge node {$\big(\begin{smallmatrix}
			1 \\
			0 
			\end{smallmatrix}\big)$} (v2);

		\path[->] (v4) edge node [near start, swap] {$\Big(\begin{smallmatrix}
			1 \\
			0 \\
			1
			\end{smallmatrix}\Big)$} (v3);
		\path[->] (v1) edge node [swap] {$\big(\begin{smallmatrix}
			1 \\
			1 
			\end{smallmatrix}\big)$} (v4); 
		\path[->] (v4) edge node [above, pos=0.5] {$\Big(\begin{smallmatrix}
			1 \\
			1\\
			1 
			\end{smallmatrix}\Big)$} (v2);  
		\end{tikzpicture}
	\end{figure}
	Since $\mathrm{deg}^{-}(v_3)=2$, $B$ has no Spin structure by part i) of the above theorem.
\end{example}

\begin{corollary}
	A small cover over $P=I \times \Delta^{2n_1}\cdots \times \Delta^{2n_k}$ does not have a Spin structure when $k\geq 2$.  
\end{corollary}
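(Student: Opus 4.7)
The plan is to suppose for contradiction that some small cover $B$ over $P=I\times \Delta^{2n_1}\times\cdots\times\Delta^{2n_k}$ with $k\geq 2$ admits a Spin structure, and to derive a contradiction via the associated acyclic $\omega$-weighted digraph $D_B$ on vertices $v_1,\ldots,v_{k+1}$ (where $\omega(v_1)=1$ and $\omega(v_j)=2n_{j-1}$ for $j\geq 2$) using Theorem \ref{thm:SpinDigraph}. The strategy is to locate two specific vertices $v_s,v_{s'}$ in $\{v_2,\ldots,v_{k+1}\}$ that force the same integer to be simultaneously odd (by condition (i)) and even (by condition (iv)).

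First, by condition (i) of Theorem \ref{thm:SpinDigraph}, $\mathrm{deg}^{-}(v_j)\equiv 3-2n_{j-1}\equiv 1\pmod{2}$ for every $j\geq 2$, so each such vertex has at least one in-neighbor in $D_B$. Hence $v_1$ is the unique source of $D_B$. Let $D'$ denote the restriction of $D_B$ to $\{v_2,\ldots,v_{k+1}\}$. I would next show that $D'$ has a unique source. Any source $v_s$ of $D'$ has no in-edges from $\{v_2,\ldots,v_{k+1}\}$, so its single in-edge in $D_B$ must come from $v_1$, forcing $\mathbf{w}(v_1,v_s)=1$. If two distinct sources $v_s,v_{s'}$ of $D'$ coexisted, they would be mutually non-adjacent in $D_B$ with $v_1$ as their only common in-neighbor, giving $M_{ss'}=\mathbf{w}(v_1,v_s)\cdot\mathbf{w}(v_1,v_{s'})=1$ and contradicting condition (ii). Acyclicity and non-emptiness of $D'$ then yield a unique source $v_s$.

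Because $k\geq 2$, the acyclic digraph $D'-v_s$ is non-empty and has a source $v_{s'}$. By uniqueness of the source of $D'$, the vertex $v_{s'}$ is not itself a source of $D'$, so it must receive an in-edge from $v_s$; thus $\mathbf{w}(v_s,v_{s'})\neq 0$ while $v_{s'}$ has no other in-neighbor in $\{v_2,\ldots,v_{k+1}\}$. Writing $c=\mathbf{w}(v_1,v_{s'})\in\{0,1\}$ and $W=\mathbf{w}(v_s,v_{s'})\cdot\mathbf{w}(v_s,v_{s'})$, a direct computation gives $\mathrm{deg}^{-}(v_{s'})=c+W$, and $M_{ss'}=c$ because the only potential common in-neighbor of $v_s,v_{s'}$ is $v_1$ (present precisely when $c=1$). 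Condition (i) makes $c+W$ odd, whereas condition (iv), applicable since $\omega(v_s)=2n_{s-1}>1$, gives $M_{ss'}\equiv W\pmod{2}$, i.e.\ $c+W$ even. This contradiction completes the argument.

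The main obstacle is the unique-source step for $D'$: one must exploit condition (ii) of Theorem \ref{thm:SpinDigraph} together with the source property to show that all cross terms in $M_{ss'}$ vanish, isolating the single contribution from the edge $v_1\to v_s$. Once $v_s$ and $v_{s'}$ are in place, the final parity clash is a routine application of conditions (i) and (iv), and the existence of $v_{s'}$ follows from $|D'|=k\geq 2$ together with acyclicity.
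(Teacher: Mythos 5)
Your proof is correct and takes essentially the same approach as the paper's: both peel off the weight-one source, then the next two vertices in a topological order, and derive a parity contradiction from conditions (i), (ii), and (iv) of Theorem~\ref{thm:SpinDigraph}. The paper reaches the contradiction slightly more directly, without first proving that $D'$ has a unique source, by using (iv) to rule out the edge $v_j\to v_k$ and then (ii) to force $\mathbf{w}(v_1,v_k)=0$; your extra unique-source step is harmless.
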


\begin{proof}
	Let $M$ be a small cover over $P$ and $G$ be the associated acyclic $\omega$-weighted digraph. Assume for a contradiction that $M$ has a Spin structure. The underlying digraph of $G$ has a source, say $v_i$. Since indegree of $v_i$ is zero, the weight of $v_i$ must be $1$. Let $v_j$ be a source of the digraph obtained by removing $v_i$ from the underlying digraph and, $v_k$ be a source of the digraph obtained by removing $v_i$ and $v_j$. Then the in-degrees of vertices $v_j$ and $v_k$ are $ \mathbf{w(v_i,v_j)}$ and $\mathbf{w(v_i,v_k)}+\mathbf{w(v_j,v_k)}\cdot \mathbf{w(v_j,v_k)}$, respectively. By part i) of the above theorem, both of them must be odd. In particular $\mathbf{w(v_i,v_j)}=1$ and, $\mathbf{w(v_i,v_k)}$ and $\mathbf{w(v_j,v_k)}\cdot \mathbf{w(v_j,v_k)}$ have different parities. On the other hand,
	$M_{jk}=\mathbf{w(v_i,v_k)}$ as a dot product of $j$-th and $k$-th column of the adjacency matrix. Since $M_{jk}$ and $\mathbf{w(v_j,v_k)}\cdot \mathbf{w(v_j,v_k)}$ have different parities, $v_j$ can not be an in-neighbor of $v_k$, by part iv) of the above theorem. This means that $\mathbf{w(v_j,v_k)}$ is the zero vector. Hence by part ii), $M_{jk}$ must be even and hence $\mathbf{w(v_i,v_k)}=0$. Contradiction.  
\end{proof}
\section{Higher Stiefel-Whitney Classes}
\label{sect:higherSW}

It is well-known that the Stiefel-Whitney classes $w_i$ of a smooth manifold satisfy the Wu formula \cite{May}
$$ Sq^i(w_j)=\sum_{t=0}^i  \binom{j+t-i-1}{t} w_{i-t}w_{j+t}$$
where $Sq^i$ denotes the Steenrod squares. Therefore, for any $i \leq j$ with $i+j=m$, one has
$$\binom{j-1}{i}w_m=Sq^i(w_m)+\sum_{t=0}^{i-1}  \binom{j+t-i-1}{t} w_{i-t}w_{j+t}.$$
Substituting $m=3$ and $i=1$ gives $w_3=Sq^1(w_2)+w_1w_2$. This means that whenever $w_1$ and $w_2$ are both zero, so is $w_3$. Therefore the following result directly follows from Corollary \ref{cor:seconSWforPS}.

\begin{proposition}\label{prop:firstthree} The first three Stiefel-Whitney classes of a small cover over a product of simplices of dimensions greater than equal to $2$ are zero if and only if $A_i\cdot A_i \equiv 3$ (mod $4$) and $A_i \cdot A_j \equiv 0$ (mod $2$) for all $i \neq j$ where $A$ is the associated reduced matrix.
\end{proposition}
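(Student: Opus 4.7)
The plan is to deduce the proposition immediately from Corollary \ref{cor:seconSWforPS} together with the Wu-formula identity derived in the paragraph preceding the statement. The hypothesis that every simplex in the product has dimension at least $2$ is precisely what places us in the $l=0$ case of Theorem \ref{theorem:seconSWforPS}, which is the only ingredient needed beyond the cohomological Wu relation.

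First I would observe that, under the assumption $n_i \ge 2$ for all $i$, Corollary \ref{cor:seconSWforPS} applies verbatim: the associated generalized real Bott manifold $M$ admits a Spin structure, that is $w_1(M)=w_2(M)=0$, if and only if the reduced matrix $A$ satisfies $A_i \cdot A_i \equiv 3 \pmod{4}$ for every $i$ and $A_i \cdot A_j \equiv 0 \pmod 2$ for every $i \neq j$. So the proof reduces to showing that, within this setting, vanishing of $w_1$ and $w_2$ is equivalent to vanishing of the first three Stiefel–Whitney classes.

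For the forward direction I would simply note that if $w_1(M)=w_2(M)=w_3(M)=0$ then in particular $w_1(M)=w_2(M)=0$, and Corollary \ref{cor:seconSWforPS} gives the claimed arithmetic conditions on the columns of $A$. For the reverse direction, assuming the column conditions, the same corollary yields $w_1(M)=w_2(M)=0$; then I would invoke the specialization of the Wu formula spelled out immediately above the proposition,
\[
w_3(M) \;=\; Sq^1\!\bigl(w_2(M)\bigr) + w_1(M)\, w_2(M),
\]
which forces $w_3(M)=0$ as soon as $w_1$ and $w_2$ vanish.

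Because the real content has already been absorbed into Corollary \ref{cor:seconSWforPS} (for the $w_1,w_2$ part) and the Wu relation (for the passage $w_1=w_2=0 \Rightarrow w_3=0$), I do not anticipate any serious obstacle here; the only thing to flag carefully is the role of the dimension hypothesis $n_i\ge 2$, which is what guarantees $l=0$ so that Corollary \ref{cor:seconSWforPS} rather than the full Theorem \ref{theorem:seconSWforPS} suffices. The proposition is therefore best presented as a short corollary rather than an independently proved result.
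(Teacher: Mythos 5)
Your argument is correct and is essentially the paper's own: the paper derives $w_3 = Sq^1(w_2) + w_1 w_2$ from the Wu formula in the paragraph immediately preceding the statement and then deduces the proposition as a direct consequence of Corollary~\ref{cor:seconSWforPS}, exactly as you do. Your observation that the hypothesis $n_i \ge 2$ is what forces $l=0$ and hence places the manifold in the setting of that corollary is the right point to flag, and your presentation as a short corollary matches the paper's.
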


Now we show that the conditions of the above proposition are not necessary for $w_3(M)$ to be zero. For this, let $k_{S}(A)$ denote the size of the set $\{t| \ a_{ts}=1 \ \mathrm{for \ all} \ s \in S\}$ for any $S \subseteq \{1,2,\cdots, k\}$ . We write $k_S$ instead of $k_S(A)$ when it is clear from the context. Note that $k_{\{i\}}=A_i \cdot A_i$ and $k_{\{i,j\}}=A_i \cdot A_j$.

\begin{theorem} The third Stiefel-Whitney class of a small cover $M$ over $P=\underset{i=1}{\overset{k}{\prod}} \Delta^{n_i}$ modulo $I$ is equal to
	$$w_3(M)=\sum_{1\leq i\leq k} \binom{k_{\{i\}}+1}{3}x_{i0}^3 +\sum_{i\neq j} P(i,j) x_{i0}^2x_{j0}+ \sum_{i_1<i_2<i_3} Q(i_1,i_2,i_3)x_{i_10}x_{i_20}x_{i_30}$$ where 
\begin{eqnarray}\label{eq:thirdSW} P(i,j)&=&\binom{k_{\{i\}}+1}{2}\cdot \big(k_{\{j\}}+1\big)-k_{\{i\}}\cdot k_{\{i,j\}},\\
Q(i_2,i_2,i_3)&=&\Big(\prod_{p=1}^3\big(k_{\{i_p\}}+1\big)\Big)+\sum_{p=1}^3(k_{\{i_p\}}+1)\cdot k_{\{i_1,i_2,i_3\} -\{i_p\}}.
\end{eqnarray}
\end{theorem}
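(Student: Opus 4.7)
The plan is to extract the degree $3$ part of the total Stiefel--Whitney formula (\ref{eq:total}). Specialized to $P=\prod_{i=1}^k \Delta^{n_i}$, that formula is already a polynomial in $x_{10},\dots,x_{k0}$, namely
$$w(M)=\prod_{i=1}^k(1+x_{i0})\cdot \prod_{t=1}^n\Big(1+\sum_{j=1}^k a_{tj}x_{j0}\Big)\pmod{I}.$$
Since every $n_i\geq 3$, the generators $x_{i0}x_{i1}\cdots x_{in_i}$ of the Stanley--Reisner ideal $I$ have degree $n_i+1\geq 4$, so no reduction modulo $I$ affects terms of degree $\leq 3$. Hence $w_3(M)$ equals the degree $3$ part of the raw polynomial expansion, and the three monomial types $x_{i0}^3$, $x_{i0}^2 x_{j0}$ (with $i\neq j$), and $x_{i_10}x_{i_20}x_{i_30}$ (with $i_1<i_2<i_3$) can be treated independently.

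For each monomial type I would first set the irrelevant $x_{l0}$'s to zero and regroup the second factor according to the support $S$ of each row on the relevant columns, obtaining a product $\prod_{S}(1+\sum_{p\in S}x_{i_p0})^{n_S}$ where $n_S$ is the number of rows with support exactly $S$; these $n_S$ are tied to the $k_T$'s of the statement by $k_T=\sum_{S\supseteq T}n_S$. The one-variable specialization collapses to $(1+x_{i0})^{k_{\{i\}}+1}$, from which the coefficient $\binom{k_{\{i\}}+1}{3}$ of $x_{i0}^3$ is immediate. The two-variable specialization yields $(1+x_{i0})^{k_{\{i\}}-k_{\{i,j\}}+1}(1+x_{j0})^{k_{\{j\}}-k_{\{i,j\}}+1}(1+x_{i0}+x_{j0})^{k_{\{i,j\}}}$, and a multinomial extraction of the $x_{i0}^2 x_{j0}$ coefficient, reduced modulo~$2$, should produce $P(i,j)$.

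For the triple term the decisive $\mathbb{F}_2$ observation is that the coefficient of $\prod_{p\in S'}x_{i_p0}$ in any factor $(1+\sum_{p\in S}x_{i_p0})^n$ equals the falling factorial $n(n-1)\cdots(n-|S'|+1)$, a product of $|S'|$ consecutive integers, hence vanishes mod~$2$ whenever $|S'|\geq 2$. Consequently only ``injective'' assignments survive: triples $(S_1,S_2,S_3)$ with $p\in S_p$ for each $p$ and $S_1,S_2,S_3$ pairwise distinct, each such factor contributing a single variable with weight $n_{S_p}$. Enumerating these assignments by the size pattern $(|S_1|,|S_2|,|S_3|)\in\{1,2,3\}^3$ and rewriting the resulting $n_S$'s in terms of the $k_T$'s should then produce $Q(i_1,i_2,i_3)$.

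The main obstacle I expect is the bookkeeping for the triple coefficient: the enumeration splits into roughly a dozen size patterns, and the mod~$2$ cancellations---for instance the vanishing $(2,2,2)$ contribution $2\,n_{\{1,2\}}n_{\{1,3\}}n_{\{2,3\}}\equiv 0$, and the identity $n^2\equiv n\pmod 2$ needed to reconcile the expansion of $(k_{\{i_1\}}+1)(k_{\{i_2\}}+1)(k_{\{i_3\}}+1)$ with the form coming from the enumeration (which is linear in each $n_S$)---must be tracked carefully to recover the compact expression for $Q$ stated in the theorem.
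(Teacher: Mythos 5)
Your approach is correct, and for the hardest part of the theorem --- the coefficient $Q(i_1,i_2,i_3)$ of the squarefree monomial $x_{i_10}x_{i_20}x_{i_30}$ --- it is genuinely different from the paper's. The paper works with the same grouped product
$\prod_j (1+y_j)^{e_j}\prod_{p<q}(1+y_p+y_q)^{e_{pq}}(1+y_1+y_2+y_3)^{e_{123}}$
(with exponents written via inclusion--exclusion in the $k_S$'s) and extracts the coefficient of $y_1y_2y_3$ directly: it conditions on which of the four factors supplies $y_3$, reduces to a $y_1y_2$-coefficient in each case, and then simplifies the resulting sum of four quadratic expressions ``by algebraically manipulating terms.'' Your route replaces that algebra with a structural $\mathbb{F}_2$ observation: because the target monomial is squarefree, any factor $(1+\sum_{p\in S}y_p)^{n_S}$ asked to contribute two or more distinct variables yields a falling factorial $n_S(n_S-1)\cdots$, which is even; hence only \emph{injective} assignments $p\mapsto S_p$ with $p\in S_p$ and the $S_p$ pairwise distinct survive, and the coefficient collapses to a multilinear sum $\sum n_{S_1}n_{S_2}n_{S_3}$. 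This trades the paper's polynomial bookkeeping for a finite enumeration of assignments plus the relations $k_T=\sum_{S\supseteq T}n_S$ and $n^2\equiv n$, and it makes the mod-$2$ cancellations visible at the outset rather than at the end. Note, though, that your pruning applies only to the squarefree term; for $x_{i0}^2x_{j0}$ you rightly fall back on the same multinomial extraction the paper uses, so the single- and pair-term computations are essentially identical to the paper's. One small remark: your ``$n_i\geq 3$'' hypothesis is not needed here. The theorem is stated as an identity modulo $I$, and the degree-$3$ part of the raw expansion of (\ref{eq:total}) represents $w_3(M)$ in $\mathbb{Z}_2[P]$ regardless of the $n_i$; the restriction $n_i\geq 3$ only becomes relevant in the subsequent corollary, where one needs the degree-$3$ graded piece of $\mathbb{Z}_2[P]$ to be the free module on these monomials in order to read off vanishing conditions coefficientwise.
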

\begin{proof} One can easily find the coefficient of $x_{i0}^3$ as in the Stiefel-Whitney classes of smaller dimensions. The coefficient of $x_{i0}^2x_{j0}$ is equal to the coefficient of $y_1^2y_2$ in the polynomial $$(1+y_1)^{k_{\{i\}}-k_{\{i,j\}}+1}(1+y_2)^{k_{\{j\}}-k_{\{i,j\}}+1}(1+y_1+y_2)^{k_{\{i,j\}}}$$ as before. We can pick $y_2$ either from the factor $(1+y_2)^{k_{\{j\}}-k_{\{i,j\}}+1}$ or from the factor $(1+y_1+y_2)^{k_{\{i,j\}}}$. If we chose it from the second one, we have to choose $y_1^2$ from $(1+y_1)^{k_{\{i\}}-k_{\{i,j\}}+1}(1+y_1+y_2)^{k_{\{i,j\}}-1}$. Therefore, we have
	\begin{eqnarray*}P(i,j)&=&(k_{\{j\}}-k_{\{i,j\}}+1)\binom{k_{\{i\}}+1}{2}+k_{\{i,j\}}\binom{k_{\{i\}}}{2} \\
	&=&\binom{k_{\{i\}}+1}{2}(k_{\{j\}}+1)-k_{\{i,j\}}.\end{eqnarray*}

The coefficient of the monomial $x_{i_10}x_{i_20}x_{i_30}$ in $w_3(M)$ is equal to the coefficient of $y_1y_2y_3$ in the product
\begin{eqnarray}\label{eq:coeffQ}\Bigg( \prod_{j=1}^3 (1+y_j)^{k_{\{i_j\}}-\underset{p\neq j}{\sum}k_{\{i_p,i_j\}}+k_{\{i_1,i_2,i_3\}}+1} \Bigg) \cdot \Bigg (\prod_{p\neq q}(1+y_p+y_q)^{k_{\{i_p,i_q\}}-k_{\{i_1,i_2,i_3\}}} \Bigg) \cdot \Big( 1+y_1+y_2+y_3\Big)^{k_{\{i_1,i_2,i_3\}}} \end{eqnarray} 
Now we can choose $y_3$ from either of the factors $(1+y_3)^{k_{\{i_3\}}-\underset{p\neq 3}{\sum}k_{\{i_p,i_3\}}+k_{\{i_1,i_2,i_3\}}+1}$, $(1+y_1+y_3)^{k_{\{i_1,i_3\}}-k_{\{i_1,i_2,i_3\}}}$, $(1+y_2+y_3)^{k_{\{i_2,i_3\}}-k_{\{i_1,i_2,i_3\}}}$ or $( 1+y_1+y_2+y_3\Big)^{k_{\{i_1,i_2,i_3\}}}$. Therefore we have
\begin{eqnarray*}Q(i_1,i_2,i_3)&=&\Big(k_{\{i_3\}}+k_{\{i_1,i_2,i_3\}}+1-\underset{p\neq 3}{\sum}k_{\{i_p,i_3\}} \Big) \cdot\Big(\big(1+k_{\{i_1\}}\big)\big(1+k_{\{i_2\}}\big)+k_{\{i_1,i_2\}}\Big)\\&&+\Big(\sum_{p=1}^2\big(k_{\{i_p,i_3\}}-k_{\{i_1,i_2,i_3\}}\big)\cdot\big(k_{\{i_p\}} \big(1+k_{\{i_1,i_2\}-\{i_p\}}\big)+k_{\{i_1,i_2\}}\big)\Big)\\
&&+k_{\{i_1,i_2,i_3\}}\cdot \big(k_{i_1}k_{i_2}+k_{\{i_1,i_2\}}-1\big).\end{eqnarray*} By algebraically manipulating terms, one can easily obtain the desired formula for $Q(i_1,i_2,i_3)$.
\end{proof}

Note that the above theorem is also true for small covers over an arbitrary simple convex polytope when the cohomology classes are represented appropriately. Moreover, one can easily find a formula for the third Stiefel-Whitney class of a small cover over a product of simplices as in the equation (\ref{eq:SSWforPS}) by taking the relations coming from $I$ into an account. Here, we focus on the case where the dimension of simplies are all greater than equal to $3$ in which $I$ does not contain any relation of dimension $3$ to obtain a simple formula.

\begin{corollary} Let $M$ be a small cover over $P=\underset{i=1}{\overset{k}{\prod}} \Delta^{n_i}$ with $n_i \geq 3$. Then $w_{3}(M)=0$ if and only if the following conditions hold:
\begin{itemize}
	\item[i)] $k_{\{i\}}\not\equiv 2$ (mod $4$),
	\item[ii)]If $k_{\{i\}}$ or $k_{\{j\}}$ is odd then $k_{\{i,j\}}\equiv 1$ (mod $2$) if and only if either $k_{\{i\}}\equiv 0$ (mod $4$) and $k_{\{j\}} \equiv 1$ (mod $4$) or vice a versa,
	\item[iii)] If $k_{\{i_1\}}\equiv k_{\{i_2\}}\equiv k_{\{i_3\}}\equiv 0$ (mod $4$) for $i_1<i_2<i_3$ then $k_{\{i_1,i_2\}}+k_{\{i_1,i_3\}}+k_{\{i_2,i_3\}}\equiv 1$ (mod $2$).
\end{itemize}
 \end{corollary}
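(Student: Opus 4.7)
The plan is to exploit the assumption $n_i \geq 3$, which forces the Stanley--Reisner ideal $I$ to have its first nontrivial relation in degree $n_i + 1 \geq 4$. After substituting the linear relations from $J$ to express every $x_{i,a}$ ($a \geq 1$) in terms of $x_{1,0}, \dots, x_{k,0}$, the degree-$3$ piece of $H^*(M;\mathbb{Z}_2)$ carries no further relations, so the monomials $x_{i,0}^3$, $x_{i,0}^2 x_{j,0}$ ($i \neq j$), and $x_{i_1,0} x_{i_2,0} x_{i_3,0}$ ($i_1 < i_2 < i_3$) are linearly independent. Consequently, $w_3(M) = 0$ if and only if each of the coefficients $\binom{k_{\{i\}}+1}{3}$, $P(i,j)$, and $Q(i_1,i_2,i_3)$ from the preceding theorem is even, and the proof reduces to unpacking these three parity conditions.

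For the diagonal terms, Lucas' theorem shows that $\binom{k+1}{3} \equiv 1 \pmod{2}$ precisely when the binary expansion of $k+1$ has both the $1$ and $2$ bits set, that is, when $k \equiv 2 \pmod{4}$. This is condition (i). For the mixed quadratic terms, one reduces $P(i,j) \equiv \binom{k_{\{i\}}+1}{2}(k_{\{j\}}+1) + k_{\{i\}}\, k_{\{i,j\}} \pmod{2}$ and performs a case analysis on $k_{\{i\}} \pmod{4}$, using (i) to rule out the residue $2$. Residue $0$ kills both summands trivially; residue $1$ requires $k_{\{j\}} + k_{\{i,j\}}$ to be odd; residue $3$ requires $k_{\{i,j\}}$ to be even. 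Running this analysis symmetrically with the condition $P(j,i) \equiv 0$ for each unordered pair $\{i,j\}$ collapses to exactly condition (ii).

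For the cubic terms, I would expand $Q(i_1,i_2,i_3)$ modulo $2$ as $\prod_{p=1}^3 (k_{\{i_p\}} + 1) + \sum_{p=1}^3 (k_{\{i_p\}}+1)\, k_{\{i_1,i_2,i_3\} \setminus \{i_p\}}$ and stratify by the number of odd $k_{\{i_p\}}$'s. With all three even, the expression collapses to $1 + k_{\{i_1,i_2\}} + k_{\{i_1,i_3\}} + k_{\{i_2,i_3\}}$, whose vanishing is condition (iii). With exactly one odd, say $k_{\{i_1\}}$, one gets $Q \equiv k_{\{i_1,i_2\}} + k_{\{i_1,i_3\}}$; condition (ii) forces both summands to have the same parity (both odd when $k_{\{i_1\}} \equiv 1 \pmod{4}$, both even when $k_{\{i_1\}} \equiv 3 \pmod{4}$), so $Q \equiv 0$ automatically. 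With exactly two odd, $Q \equiv k_{\{i_1,i_2\}}$, which (ii) again forces to be even. With all three odd, every summand carries a factor $(k_{\{i_p\}}+1)$ that is even, so $Q \equiv 0$ trivially.

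The main obstacle is the bookkeeping in this last step: one has to check that (ii) is \emph{exactly} strong enough to make $Q$ vanish in the three ``mixed'' subcases without imposing constraints beyond (iii) in the ``all even'' subcase. Once that verification is completed, the equivalence of $w_3(M) = 0$ with the conjunction of (i), (ii), and (iii) follows immediately.
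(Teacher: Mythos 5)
Your proof is correct and takes essentially the same route as the paper: reduce $w_3(M)=0$ to the vanishing modulo $2$ of the three coefficient families (which is valid precisely because $n_i\geq 3$ keeps the Stanley--Reisner ideal out of degree $3$), then do a residue-class analysis modulo $4$ using condition (i). The only cosmetic difference is in handling $Q$: you stratify by the number of odd $k_{\{i_p\}}$'s, while the paper fixes one odd index $i_1$ and notes that the two surviving terms $(1+k_{\{i_2\}})k_{\{i_1,i_3\}}$ and $(1+k_{\{i_3\}})k_{\{i_1,i_2\}}$ are odd under identical conditions and hence cancel; both routes verify the same fact.
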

\begin{proof} Since $I$ does not contain a monomial of degree less than equal to $3$ when $P=\underset{i=1}{\overset{k}{\prod}} \Delta^{n_i}$ with $n_i \geq 3$, $w_3(M)$ is zero if and only if $\displaystyle \binom{k_{\{i\}}+1}{3} \equiv 0$ (mod $2$) for all $i$, $P(i,j)\equiv 0$ (mod $2$) for all $i\neq j$ and $Q(i_1,i_2,i_3)\equiv 0$ (mod $2$) for all $i_1<i_2<i_3$. Here the first condition is equivalent to the condition $i)$. If neither $k_{\{i\}}$ nor $k_{\{j\}}$ is divisible by $4$ then $P(i,j)\equiv P(j,i)\equiv 0$ (mod $2$) if and only if $k_{\{i,j\}} \equiv 0$ (mod $2$). Let $k_{\{i\}} \equiv 0$ (mod $4$). Then $P(i,j) \equiv 0$ (mod $2$) for all $j \neq i$. Moreover, $P(j,i)\equiv \binom{k_{\{j\}}+1}{2}-k_{\{j\}}k_{\{i,j\}}$ is even if and only if either $k_{\{j\}}\equiv 0$ (mod $4$) or $k_{\{j\}} \equiv 1$ (mod $4$) and $k_{\{i,j\}}\equiv 1$ (mod $2$), or $k_{\{j\}} \equiv 3$ (mod $4$) and $k_{\{i,j\}}\equiv 0$ (mod $2$). Therefore when the condition $i)$ holds, $P(i,j)\equiv P(j,i)\equiv 0$ (mod $2$) if and only if $M$ satisfies the condition $ii)$.
	
Now suppose that the conditions $i)$ and $ii)$ are hold. If $k_{\{i_1\}}, k_{\{i_2\}} $ and $k_{\{i_3\}}$ are all divisible by $4$ then we have 
$$Q(i_1,i_2,i_3)\equiv 1+k_{\{i_2,i_3\}}+k_{\{i_1,i_3\}}+k_{\{i_1,i_2\}} \ \ \ \ \ (\mathrm{mod} \ 2 \ )$$
and hence $Q(i_1,i_2,i_3)\equiv 0$ (mod $2$) if and only if $iii)$ holds for the triple $(i_1,i_2,i_3)$. Now suppose that at least one of them is not divisible by $4$. WLOG, assume that $k_{\{i_1\}} \not\equiv 0$ (mod $4$). Then $(1+k_{\{p\}})k_{\{i_1,p\}}\equiv 1$ (mod $2$) if and only if $k_{\{p\}}\equiv 0$ (mod $4$) and  $k_{\{i_1\}} \equiv 1$ (mod $4$). Therefore, we have
$$Q(i_1,i_2,i_3)\equiv (1+k_{\{i_2\}})k_{\{i_1,i_3\}}+(1+k_{\{i_3\}})k_{\{i_1,i_2\}}\equiv 0 \ \ \ \ \ (\mathrm{mod} \ 2 \ ).$$ This proves the theorem.
\end{proof}

Since $k_i=\mathrm{deg}^{-}(v_i)+\omega(i)$ and $k_{ij}=M_{ij}+\mathbf{w(v_i,v_j)}\cdot \mathbf{w(v_i,v_j)}+ \mathbf{w(v_j,v_i)}\cdot \mathbf{w(v_j,v_i)}$, we have the following.

\begin{corollary}
	Let $D_M$ be an $\omega$-weighted acyclic digraph associated to a small cover $M$ over $P=\underset{i=1}{\overset{k}{\prod}} \Delta^{n_i}$ with $n_i \geq 3$. Then $w_{3}(M)=0$ if and only if the following conditions hold for vertices of $D_M$:
	\begin{itemize}
		\item[i)] $\mathrm{deg}^{-}(v_i)+\omega(i) \not\equiv 2$ (mod $4$),
		\item[ii)] If $\mathrm{deg}^{-}(v_i)+\omega(i)$ or $\mathrm{deg}^{-}(v_j)+\omega(j)$ is odd then $M_{ij}+\mathbf{w(v_i,v_j)}\cdot \mathbf{w(v_i,v_j)}+ \mathbf{w(v_j,v_i)}\cdot \mathbf{w(v_j,v_i)}\equiv 1$ (mod $2$) if and only if either $\mathrm{deg}^{-}(v_i)+\omega(i)\equiv 0$ (mod $4$) and $\mathrm{deg}^{-}(v_j)+\omega(j) \equiv 1$ (mod $4$) or vice a versa,
		\item[iii)] If $\mathrm{deg}^{-}(v_{i_1})+\omega(i_1)\equiv \mathrm{deg}^{-}(v_{i_2})+\omega(i_2)\equiv \mathrm{deg}^{-}(v_{i_3})+\omega(i_3)\equiv 0$ (mod $4$) then $$\sum_{p\neq q}\Big( M_{i_pi_q}+\mathbf{w(v_{i_p},v_{i_q})}\cdot \mathbf{w(v_{i_p},v_{i_q})}+ \mathbf{w(v_{i_q},v_{i_p})}\cdot \mathbf{w(v_{i_q},v_{i_p})}\Big) \equiv 1 \ (mod \ 2).$$
	\end{itemize}
\end{corollary}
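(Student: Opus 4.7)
The plan is simply to translate the preceding matrix-based corollary into the digraph language using the two substitution identities stated in the remark immediately before the statement, namely
\[ k_{\{i\}} \;=\; \mathrm{deg}^{-}(v_i)+\omega(i) \qquad\text{and}\qquad k_{\{i,j\}} \;=\; M_{ij} + \mathbf{w(v_i,v_j)}\cdot \mathbf{w(v_i,v_j)} + \mathbf{w(v_j,v_i)}\cdot \mathbf{w(v_j,v_i)}. \]
These identities follow directly from the discussion right before Theorem~\ref{thm:SpinDigraph}: since $A = A_\omega(D_M)+I_\omega$, the $i$-th diagonal block of $I_\omega$ contributes exactly $\omega(i)$ to $A_i\cdot A_i$, and for $i\neq j$ the cross terms of $(A_\omega(D_M)+I_\omega)_i$ with $(A_\omega(D_M)+I_\omega)_j$ produce the two correction terms $\mathbf{w(v_i,v_j)}\cdot \mathbf{w(v_i,v_j)}$ and $\mathbf{w(v_j,v_i)}\cdot \mathbf{w(v_j,v_i)}$ coming from the identity block rows.

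First, I would rewrite condition (i) of the previous corollary, $k_{\{i\}}\not\equiv 2 \pmod 4$, as $\mathrm{deg}^{-}(v_i)+\omega(i)\not\equiv 2 \pmod 4$, giving condition (i) here verbatim. Next, for condition (ii), the hypothesis ``$k_{\{i\}}$ or $k_{\{j\}}$ is odd'' becomes ``$\mathrm{deg}^{-}(v_i)+\omega(i)$ or $\mathrm{deg}^{-}(v_j)+\omega(j)$ is odd,'' and the equivalence involving the parity of $k_{\{i,j\}}$ and the residues of $k_{\{i\}}, k_{\{j\}}$ modulo $4$ becomes the equivalence stated in (ii) after substituting the formula for $k_{\{i,j\}}$. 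Finally for condition (iii), substitute $k_{\{i_p\}} = \mathrm{deg}^{-}(v_{i_p})+\omega(i_p)$ into the hypothesis and expand the sum $k_{\{i_1,i_2\}}+k_{\{i_1,i_3\}}+k_{\{i_2,i_3\}}$ as the symmetric sum $\sum_{p\neq q}$ appearing in (iii) (noting that each unordered pair $\{p,q\}$ is counted twice in the symmetric sum, but since everything is considered modulo $2$ one must be slightly careful — actually the sum in the statement runs over ordered pairs $p\neq q$, which doubles each unordered-pair contribution and hence vanishes mod $2$; the intended reading is that each unordered pair is counted once, which is what the substitution directly yields).

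Since no new geometric or algebraic input is required beyond the two substitution identities, the proof is essentially bookkeeping. The only step requiring a bit of care is ensuring the indexing convention in (iii) matches: one should verify that the displayed symmetric sum in (iii) is read so as to correspond to the three unordered pairs appearing in $k_{\{i_1,i_2\}}+k_{\{i_1,i_3\}}+k_{\{i_2,i_3\}}$. Once that is settled, each of (i), (ii), (iii) here is logically equivalent to the corresponding condition of the previous corollary, and the result follows immediately from that corollary.
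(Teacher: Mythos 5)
Your proof is correct and matches the paper's approach exactly: the paper states this corollary immediately after recording the two substitution identities $k_{\{i\}} = \mathrm{deg}^-(v_i)+\omega(i)$ and $k_{\{i,j\}} = M_{ij}+\mathbf{w(v_i,v_j)}\cdot\mathbf{w(v_i,v_j)}+\mathbf{w(v_j,v_i)}\cdot\mathbf{w(v_j,v_i)}$, so the intended proof is precisely the bookkeeping substitution you describe. Your side remark that the displayed $\sum_{p\neq q}$ in (iii), read literally as a sum over ordered pairs, would double each symmetric term and vanish mod $2$ is a correct observation about a notational slip; the intended reading is the sum over the three unordered pairs, which is exactly what corresponds to $k_{\{i_1,i_2\}}+k_{\{i_1,i_3\}}+k_{\{i_2,i_3\}}$.
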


As shown above, when $M$ is a generalized Bott manifold, the Stiefel-Whitney classes of $M$ of dimensions less than equal to $3$ can be written in terms of the dot products of columns of the associated reduced vector matrix $A$. It is natural to ask whether this is true for all dimensions. The following theorem gives an affirmative answer to this question.

\begin{theorem}\label{thm:formula4}
	The fourth Stiefel-Whitney class of a small cover $M$ over $P=\underset{i=1}{\overset{k}{\prod}} \Delta^{n_i}$ modulo $I$ is equal to
	\begin{eqnarray*}w_4(M)&=&\sum \binom{k_{\{i\}}+1}{4}x_{i0}^4 +\sum P_1(i,j) x_{i0}^3x_{j0}+\sum P_2(i,j)x_{i0}^2x_{j0}^2\\&&+ \sum Q(i_1,i_2,i_3))x^2_{i_10}x_{i_20}x_{i_30}+\sum R(i_1,i_2,i_3,i_4)x_{i_10}x_{i_20}x_{i_30}x_{i_40}\end{eqnarray*} where 
	
	\begin{eqnarray*} P_1(i,j)&=&\binom{k_{\{i\}}+1}{3}\cdot \big(k_{\{j\}}+1\big)-\binom{k_{\{i\}}}{2}\cdot k_{\{i,j\}},\\
P_2(i,j)&=&\binom{k_{\{i\}}+1}{2}\cdot \binom{k_{\{j\}}+1}{2}-k_{\{i\}}k_{\{j\}} k_{\{i,j\}}+\binom{k_{\{i,j\}}}{2},\\
Q(i_1,i_2,i_3)&=&\binom{k_{\{i_1\}}+1}{2}\Big(\big(k_{\{i_2\}}+1\big)\big(k_{\{i_3\}}+1\big)-k_{\{i_2,i_3\}}\Big)-k_{\{i_1\}}\Big(\sum_{p\neq 1}k_{\{i_1,i_p\}}\big(k_{\{i_2,i_3\}-\{i_p\}}+1\big)\Big)\\
&&+k_{\{i_1,i_2\}}k_{\{i_1,i_3\}}-k_{\{i_1,i_2,i_3\}},\\
		R(i_1,i_2,i_3,i_4)&=&\Big(\prod_{p=1}^4\big(k_{\{i_p\}}+1\big)\Big)-\sum_{p\neq q}\Big(\big(k_{\{i_p\}}+1\big)\big(k_{\{i_q\}}+1\big)-\frac{k_{\{i_p,i_q\}}}{2}\Big)\cdot k_{\{i_1,i_2,i_3,i_4\} -\{i_p,i_q\}}.
	\end{eqnarray*}
\end{theorem}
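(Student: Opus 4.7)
The plan is to extend the approach used for $w_2$ (Proposition \ref{prop:secondSW}) and the preceding $w_3$ theorem by extracting the degree-$4$ part of the total Stiefel--Whitney class formula (\ref{eq:total}). Since every monomial appearing in $w_4(M)$ involves only the variables $x_{i0}$, and every Stanley--Reisner relation in $I$ involves some $x_{ij}$ with $j\geq 1$, the congruence ``modulo $I$'' imposes no restriction on the formula, so the computation may be carried out in the polynomial ring $\mathbb{Z}_2[x_{10},\ldots,x_{k0}]$.

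Fix distinct indices $i_1,\ldots,i_s\in\{1,\ldots,k\}$ and consider the coefficient of a monomial $x_{i_10}^{e_1}\cdots x_{i_s0}^{e_s}$ with $e_1+\cdots+e_s=4$. I would partition the rows of the reduced matrix $A$ according to their restriction to the columns $i_1,\ldots,i_s$: for each non-empty $S\subseteq\{1,\ldots,s\}$, let $c_S$ count the rows whose pattern of $1$'s in those columns equals the indicator vector of $S$. Inclusion--exclusion gives $c_S=\sum_{T\supseteq S}(-1)^{|T|-|S|}k_{\{i_p:p\in T\}}$. Absorbing the factors $(1+x_{i_p0})$ coming from the first product in (\ref{eq:total}) by setting $\widetilde c_{\{p\}}=c_{\{p\}}+1$, the desired coefficient equals the coefficient of $y_1^{e_1}\cdots y_s^{e_s}$ in
\[
\prod_{\emptyset\neq S\subseteq\{1,\ldots,s\}}\Bigl(1+\sum_{p\in S}y_p\Bigr)^{\widetilde c_S}.
\]

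The single-index case immediately gives $\binom{k_{\{i\}}+1}{4}$. For the two-index monomials $x_{i0}^3x_{j0}$ and $x_{i0}^2x_{j0}^2$, only the three factors $(1+y_1)^a$, $(1+y_2)^b$, $(1+y_1+y_2)^c$ with $a=k_{\{i\}}-k_{\{i,j\}}+1$, $b=k_{\{j\}}-k_{\{i,j\}}+1$, $c=k_{\{i,j\}}$ appear, and a short case analysis on the source of each chosen $y_p$ yields $P_1$ and $P_2$. The three-index coefficient $Q(i_1,i_2,i_3)$ involves seven factors and parallels the $Q$-computation in the preceding $w_3$ theorem, with one extra factor of $y_1$; applying standard identities such as $\binom{n+1}{2}=\binom{n}{2}+n$ to regroup terms produces the stated expression.

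The main obstacle is the four-index case $R(i_1,i_2,i_3,i_4)$. Fifteen factors are in play and the coefficient of $y_1y_2y_3y_4$ must be computed by summing over the ways of distributing the four variables among the factors. I would organise the contributions by set-partitions of $\{1,2,3,4\}$ --- there are five partition types, according to the block-size pattern $(4)$, $(3,1)$, $(2,2)$, $(2,1,1)$, $(1,1,1,1)$ --- and then substitute the inclusion--exclusion expression for each $c_S$. The resulting double sum is finite but long; the technical challenge is to verify that, after all cancellations, it collapses to the compact closed form involving only the dot products $k_S$ for $|S|\leq 4$. This combinatorial simplification is the routine but error-prone heart of the argument, and is where essentially all the effort will be spent.
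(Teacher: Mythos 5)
Your proposal follows the same route as the paper: extract the degree-4 part of the total Stiefel--Whitney class formula (\ref{eq:total}), factor the relevant polynomial as a product $\prod_{\emptyset\neq S}(1+\sum_{p\in S}y_p)^{c_S}$ indexed by the row-pattern counts $c_S$ (equivalently, the inclusion--exclusion combinations of the $k_S$, as in the paper's equation (\ref{eq:coeffQ})), and read off the coefficient of each degree-4 monomial in the $y_p$'s by tracking which factor each variable is drawn from. The paper's proof likewise treats only one of the coefficients ($Q$) in full and leaves the rest ``found similarly,'' so your outline is at essentially the same level of completeness and uses the same key idea.
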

\begin{proof} Since the rest can be found similarly, we only provide a proof for the formula for $Q(i_1,i_2,i_3)$. Here $Q(i_1,i_2,i_3)$  is equal to the coefficient of $y_1y_2y_3$ in (\ref{eq:coeffQ}). One can to choose $y_3$ from either of the factors $(1+y_3)^{k_{\{i_3\}}-\underset{p\neq 3}{\sum}k_{\{i_p,i_3\}}+k_{\{i_1,i_2,i_3\}}+1}$, $(1+y_1+y_3)^{k_{\{i_1,i_3\}}-k_{\{i_1,i_2,i_3\}}}$, $(1+y_2+y_3)^{k_{\{i_2,i_3\}}-k_{\{i_1,i_2,i_3\}}}$ or $( 1+y_1+y_2+y_3\Big)^{k_{\{i_1,i_2,i_3\}}}$. Therefore we have
	\begin{eqnarray*}Q(i_1,i_2,i_3)&=&\Big(k_{\{i_3\}}+k_{\{i_1,i_2,i_3\}}+1-\underset{p\neq 3}{\sum}k_{\{i_p,i_3\}} \Big) \cdot \Big[\binom{k_{\{i_1\}}+1}{2}\cdot \big(k_{\{i_2\}}+1\big)-k_{\{i_1\}}k_{\{i_1,i_2\}}\Big]\\&&+\big(k_{\{i_1,i_3\}}-k_{\{i_1,i_2,i_3\}}\big)\cdot\Bigg[\binom{k_{\{i_1\}}}{2}\cdot \big(k_{\{i_2\}}+1\big)-\big(k_{\{i_1\}}-1\big)k_{\{i_1,i_2\}}\Bigg]\\
		&&+\big(k_{\{i_2,i_3\}}-k_{\{i_1,i_2,i_3\}}\big)\cdot\Bigg[\binom{k_{\{i_1\}}+1}{2}\cdot k_{\{i_2\}}-k_{\{i_1\}}k_{\{i_1,i_2\}}\Bigg]\\
		&&+k_{\{i_1,i_2,i_3\}}\cdot \Bigg[\binom{k_{\{i_1\}}}{2}\cdot k_{\{i_2\}}-\big(k_{\{i_1\}}-1\big)\big(k_{\{i_1,i_2\}}-1\big)\Bigg].\end{eqnarray*} Since the sum of the first factors of each term in the  RHS of the equation is $k_{\{i_3\}}+1$, the result easily follows.
\end{proof}

\begin{corollary}Let $M$ be a small cover over $P=\underset{i=1}{\overset{k}{\prod}} \Delta^{n_i}$ with $n_i \geq 4$. Then $w_{4}(M)=0$ if and only if the following conditions hold:
	\begin{itemize}
		\item[i)]$k_{\{i\}}\equiv 0,1,2$ or $7$ (mod $8$),
		\item[ii)] $k_{\{i,j\}}$ must satisfy the following table
		\begin{center}
			\begin{tabular}{c|c|c}
				$k_{\{i\}}$ (mod $8$) & $k_{\{j\}}$ (mod $8$) & $k_{\{i,j\}}$ (mod $4$)\\ \hline 
				$0$&$0$&$1$\\
				$0$&1&$0$ or $1$\\
				$0$&$2$&$1$\\
				$1$&$1$&$2$\\
				$1$&$2$&$2$\\
				$2$&$2$&$3$\\
				-&$7$& $0$				
			\end{tabular}
		\end{center}
		\item[iii)]$k_{\{i,j,l\}}$ must satisfy the following table
		\begin{center}
			\begin{tabular}{c|c|c|c}
				$k_{\{i\}}$ (mod $8$) & $k_{\{j\}}$ (mod $8$) &$k_{\{l\}}$ (mod $8$)& $k_{\{i,j,l\}}$ (mod $2$)\\ \hline 
				$0$&$0$&$0$&$1$\\
				$0$&$0$&1&$k_{\{0,1\}}$\\
				$0$&$0$&$2$&$1$\\
				$0$&$1$&$1$&$k_{\{0,1\}}$\\
				$0$&$1$&$2$&$k_{\{0,1\}}$\\
				$0$&$2$&$2$&$1$\\
				$1$&$1$&$1$&$0$\\
				$1$&$1$&$2$&$0$\\
				$1$&$2$&$2$&$0$\\
				$2$&$2$&$2$&$1$
			\end{tabular}
		\end{center} 
	\end{itemize}
\end{corollary}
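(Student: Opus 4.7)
The plan is to combine Theorem~\ref{thm:formula4} with a case analysis on the residue classes of $k_{\{i\}}$ modulo~$8$ and of $k_{\{i,j\}}$ modulo~$4$. Since $n_i \geq 4$ for every $i$, after reducing to the variables $\{x_{i0}\}$ the Stanley--Reisner ideal $I$ contains no relation of degree at most~$4$; hence $w_4(M) \equiv 0$ modulo $I$ is equivalent to the mod~$2$ vanishing of every coefficient in the expansion given by Theorem~\ref{thm:formula4}. The obligations thus split into five families: $\binom{k_{\{i\}}+1}{4} \equiv 0$; $P_1(i,j) \equiv P_1(j,i) \equiv 0$; $P_2(i,j) \equiv 0$; $Q(i_1,i_2,i_3) \equiv 0$ across its asymmetric permutations; and $R(i_1,i_2,i_3,i_4) \equiv 0$.

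First I would apply Lucas' theorem to $\binom{k+1}{4} \pmod 2$. Writing $4 = (100)_2$, the binomial is odd exactly when the bit of $k+1$ in position~$2$ equals $1$, which forces $k \equiv 3,4,5,6 \pmod 8$; requiring its vanishing gives condition~(i). For each admissible pair of residues $(k_{\{i\}}, k_{\{j\}}) \pmod 8$, I would then substitute into $P_1(i,j)$, $P_1(j,i)$, and $P_2(i,j)$ and reduce using the facts that $\binom{k}{2} \pmod 2$ depends only on $k \pmod 4$ and $\binom{k}{3} \pmod 2$ only on $k \pmod 8$. The joint mod~$2$ vanishing of the three coefficients produces an affine congruence pinning down $k_{\{i,j\}} \pmod 4$; I expect $P_2(i,j)$ to be automatic once both $P_1$ conditions hold. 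Tabulating across the seven admissible unordered residue pairs assembles condition~(ii).

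The triple-index analysis proceeds in the same spirit: given~(i) and~(ii), every $k_{\{i_p,i_q\}}$ entering the formula for $Q(i_1,i_2,i_3)$ is known modulo~$4$, and reducing $Q \pmod 2$ leaves a single linear congruence for $k_{\{i_1,i_2,i_3\}} \pmod 2$ in each case. Since $Q$ is not symmetric in its three arguments, one must verify that the constraints obtained from the different orderings are mutually consistent; reading off the solution across the ten unordered triples of residues then produces condition~(iii).

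The last obligation is to verify that $R(i_1,i_2,i_3,i_4) \equiv 0 \pmod 2$ follows automatically from~(i) and~(ii), which is why no fourth table appears, and this is what I expect to be the main obstacle. The crucial observation is that the multi-index notation $k_{\{i_1,i_2,i_3,i_4\} - \{i_p,i_q\}}$ refers to the $2$-fold intersection indexed by the complementary pair, so $R$ involves only $1$-fold and $2$-fold intersection numbers; its mod~$2$ reduction is therefore determined entirely by data already controlled in~(i) and~(ii). The verification then reduces to a finite check over unordered quadruples of residues drawn from $\{0,1,2,7\}$ modulo~$8$, with each $k_{\{i_p,i_q\}}$ residue substituted from table~(ii), and confirming cancellation in $\mathbb{F}_2$.
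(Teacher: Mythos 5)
Your reduction of $w_4(M)=0$ to the five families of coefficient congruences, and the Lucas argument giving condition (i), both match the paper. But your plan for condition (ii) rests on an assertion that is false, and the error propagates. You claim ``$P_2(i,j)$ [should] be automatic once both $P_1$ conditions hold.'' In fact the dependence is the opposite: when $k_{\{i\}}\equiv 0$ or $1\ (\mathrm{mod}\ 8)$ we have $k_{\{i\}}+1\equiv 1$ or $2\ (\mathrm{mod}\ 8)$ and $k_{\{i\}}\equiv 0$ or $1\ (\mathrm{mod}\ 4)$, so $\binom{k_{\{i\}}+1}{3}$ and $\binom{k_{\{i\}}}{2}$ are both even and $P_1(i,j)\equiv 0$ identically, carrying no information about $k_{\{i,j\}}$. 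In those cases $P_2(i,j)\equiv \binom{k_{\{i,j\}}}{2}$ (resp.\ $k_{\{i,j\}}+\binom{k_{\{i,j\}}}{2}$) is the only pairwise constraint. So $P_2$ cannot be discarded; it is the backbone of the $(0,\ast)$ and $(1,\ast)$ rows of table (ii).

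This leads to a second, deeper gap. Even using $P_1(i,j)$, $P_1(j,i)$, and $P_2(i,j)$ together, for a pair with $\theta_i=\theta_j=0$ you only obtain $k_{\{i,j\}}\equiv 0$ or $1\ (\mathrm{mod}\ 4)$, and for $\theta_i=\theta_j=1$ only $k_{\{i,j\}}\equiv 1$ or $2\ (\mathrm{mod}\ 4)$; table (ii) pins these down to a unique residue ($1$ and $2$ respectively). The paper resolves this ambiguity by bringing in the coefficients $R$ of square-free quadruples (and, implicitly, $Q$ of triples), which genuinely constrain the pairwise data in the all-$0$ and all-$1$ cases. So your proposed route --- derive table (ii) from $P_1,P_2$ alone and then verify $R\equiv 0$ ``automatically'' --- cannot be completed: the pairwise conditions alone are strictly weaker than (ii), and $R$ is doing real work rather than being a free consequence. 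You would need to interleave the $R$ and $Q$ analysis with the pairwise one, as the paper does, rather than sequencing them.
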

\begin{proof} Note that $\displaystyle \binom{k_{\{i\}}+1}{4} \equiv 0$ (mod $2$) if and only if $k_{\{i\}}$ satisfies the condition $i)$. Here $k_{\{i,j\}}$ and $k_{\{i,j,l\}}$ depend on the values of $k_{\{i\}},k_{\{j\}}$ up to modulo $8$, and $k_{\{i\}},k_{\{j\}}$ and $k_{\{l\}}$ up to modulo $8$, respectively. Let $\theta_{i}$ denote the integer between $0$ and $7$ that is congruent to $k_{\{i\}}$ modulo $8$.

Suppose that $w_4(M)=0$. Therefore $P_1(i,j), P_2(i,j), Q(i_1,i_2,i_3)$ and $R(i_1,i_2,i_3,i_4)$ are zero modulo $2$ for all possible combinations. When $\theta_{i}=7$, $P_1(i,j)\equiv k_{\{i,j\}}$ and $P_2(i,j)\equiv k_{\{i,j\}}+\binom{k_{\{i,j\}}}{2}$. This gives that $k_{\{i,j\}} \equiv 0$ (mod $4$) when $\theta_i=7$. When $\theta_{i}=2$, $P_1(i,j)\equiv k_{\{i\}}+1+k_{\{i,j\}}$ (mod $2$) and hence we have $k_{\{i,j\}} \equiv 0$ (mod $2$) when $\theta_{j}= 1,7$ and $k_{\{i,j\}} \equiv 1$ (mod $2$) when $\theta_{j}= 0,2$. Since when $(\theta_{i}, \theta_{j})=(2,2)$, $P_2(i,j) \equiv 1+\binom{k_{\{i,j\}}}{2}$ (mod $2$),  $k_{\{i,j\}}\equiv 3$ (mod $4$). When $\theta_{i}=0$, $P_2(i,j)\equiv 0$ (mod $2$) yields $k_{\{i,j\}}\equiv 0$ or $1$ (mod $4$). In particular we have $k_{\{i,j\}}\equiv 1$ (mod $4$) when $(\theta_{i}, \theta_{j})=(0,2)$. Similarly, when $\theta_{i}=1$, $P_2(i,j)\equiv 0$ (mod $2$) gives $k_{\{i,j\}}\equiv 1$ or $2$ (mod $4$) and hence we have $k_{\{i,j\}}\equiv 2$ (mod $4$) when $(\theta_{i}, \theta_{j})=(1,2)$. 
		
When $\theta_{i}=0$ for all $i\in\{i_1,i_2,i_3,i_4\}$, $R(i_1,i_2,i_3,i_4)\equiv 1+k_{\{i_1,i_2\}}$ (mod $2$) and hence it is zero modulo $2$ if and only if  $k_{\{i_1,i_2\}} \equiv 1$ (mod $2$). Since $k_{\{i_1,i_2\}} \equiv 0$ or $1$ (mod $4$) whenever $\theta_{i}=0$, we have $k_{\{i_1,i_2\}} \equiv 1$ (mod $4$) in this case. Similarly, when $\theta_{i}=1$ for all $i\in\{i_1,i_2,i_3,i_4\}$, $R(i_1,i_2,i_3,i_4)\equiv 0$ (mod $2$) yields $k_{\{i_1,i_2\}} \equiv 2$ (mod $4$) since it is either $1$ or $2$ modulo $4$.

Under these assumptions, when $\theta_i=7$ for one of the $i_1,i_2$ or $i_3$, $Q(i_1,i_2,i_3) \equiv k_{\{i_1,i_2,i_3\}} \equiv 0$ (mod $2$). When $(\theta_{i_1}, \theta_{i_2},\theta_{i_3})=(2,0,0)$, $Q(i_1,i_2,i_3) \equiv 1+ k_{\{i_1,i_2,i_3\}} \equiv 0$ (mod $2$). Similarly, $Q(i_1,i_2,i_3) \equiv 0$ (mod $2$) for $(\theta_{i_1}, \theta_{i_2},\theta_{i_3})=(2,p_1,p_2)$ and $(\theta_{i_1}, \theta_{i_2},\theta_{i_3})=(0,q_1,q_2)$ where $0\leq p_t \leq 2$ and $0 \leq q_t \leq 1$ give the all the remaining restrictions on $k_{\{i_1,i_2,i_3\}}$ and proves the only if part of the theorem. One can easily check that under these restrictions, $w_4(M)=0$. 
\end{proof}

Whenever $m$ is not a power of $2$, the Wu formula can be used to express $w_m$ in terms of lower Stiefel-Whitney classes and their Steenrod squares. Hence one can conclude that whenever the lower dimensional Stiefel-Whitney classes are zero then so is $w_m$ for $m \neq 2^p$ for any $p$. Hence we have the following result.

\begin{corollary} Let $M$ be a small cover over $P=\underset{i=1}{\overset{k}{\prod}} \Delta^{n_i}$ with $n_i \geq 4$ with an associated matrix $A$. Then the first seven Stiefel- Whitney classes of $M$ are zero if and only if $A_i \cdot A_i \equiv 7$ (mod $8$), $A_i \cdot A_j \equiv 0$ (mod $4$) and $k_{\{i,j,l\}}=|\{t|a_{it}=a_{jt}=a_{lt}=1\}| \equiv 0$ (mod $2$) for all $i<j<l$.
\end{corollary}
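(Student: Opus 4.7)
The plan is to reduce the vanishing of $w_1,\dots,w_7$ to that of $w_1$, $w_2$ and $w_4$, and then to intersect the characterisations of $w_1=w_2=0$ from Corollary~\ref{cor:seconSWforPS} with the characterisation of $w_4=0$ from the corollary immediately above.

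First I would invoke the Wu formula observation recalled just before the statement: whenever $m$ is not a power of $2$ and $w_1,\dots,w_{m-1}$ vanish, then $w_m$ vanishes. Since $3,5,6,7$ are all non-powers of $2$, an easy induction reduces the problem to showing that the conditions listed are equivalent to $w_1(M)=w_2(M)=w_4(M)=0$.

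Next I would apply Corollary~\ref{cor:seconSWforPS}: because $n_i\geq 4>1$ for every $i$ the parameter $l$ equals $0$, so $w_1=w_2=0$ is equivalent to $k_{\{i\}}\equiv 3\pmod 4$ for all $i$ and $k_{\{i,j\}}\equiv 0\pmod 2$ for all $i<j$. I would then impose the $w_4$-vanishing criterion on top of these. Condition~(i) of that corollary restricts $k_{\{i\}}$ to $\{0,1,2,7\}\pmod 8$; intersected with $k_{\{i\}}\equiv 3\pmod 4$ this forces $k_{\{i\}}\equiv 7\pmod 8$, so $\theta_i=7$ for every $i$. Under this hypothesis, the only row of table~(ii) that applies is the last one, giving the stronger condition $k_{\{i,j\}}\equiv 0\pmod 4$, which subsumes the parity condition obtained above.

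The step I expect to be the main obstacle is the triple-intersection condition, since the case $(\theta_i,\theta_j,\theta_l)=(7,7,7)$ is not listed explicitly in table~(iii). To handle it I would return to the proof of the $w_4$ corollary, where it is shown that as soon as at least one of the $\theta_{i_p}$ equals $7$ one has $Q(i_1,i_2,i_3)\equiv k_{\{i_1,i_2,i_3\}}\pmod 2$; hence for $\theta_{i_1}=\theta_{i_2}=\theta_{i_3}=7$ the vanishing of $Q$ is equivalent to $k_{\{i,j,l\}}\equiv 0\pmod 2$. A brief check that the coefficients $P_1$, $P_2$ and $R$ introduce no further constraints once $\theta_i=7$ and $k_{\{i,j\}}\equiv 0\pmod 4$ are imposed completes the reduction, yielding exactly the three conditions in the statement.
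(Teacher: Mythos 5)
Your proposal is correct and takes essentially the same route as the paper: reduce by the Wu formula to the vanishing of $w_1$, $w_2$, $w_4$, then combine the characterization of $w_1=w_2=0$ from Proposition~\ref{prop:firstthree} (equivalently Corollary~\ref{cor:seconSWforPS}) with the $w_4$ criterion. The paper's own proof is terser — it only explicitly cites Proposition~\ref{prop:firstthree}, the Wu argument, and Theorem~\ref{thm:formula4} and leaves the ``only if'' intersection (including the $(\theta_{i_1},\theta_{i_2},\theta_{i_3})=(7,7,7)$ case, which is absent from the preceding corollary's table) for the reader — so you have simply spelled out steps the paper treats as immediate.
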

\begin{proof} By Proposition \ref{prop:firstthree} and the above argument, it suffices to show that if $A_i \cdot A_i \equiv 7$ (mod $8$), $A_i \cdot A_j \equiv 0$ (mod $4$) and $k_{\{i,j,l\}}=|\{t|a_{it}=a_{jt}=a_{lt}=1\}| \equiv 0$ (mod $2$) for all $i<j<l$ then $w_4(M)=0$. This directly follows from Theorem \ref{thm:formula4}. 
\end{proof}

When $m$ is a power of $2$, for all $i+j=m$, $\binom{j-1}{i}$ is always even and hence one can not use the Wu formula to find $w_m$. Considering the results of the paper, we believe that for each $m=2^t$, $k_{\{S\}}$'s where $S$ is a subset of size $t$ of $\{1,2,\cdots,k\}$  will appear as a coefficient of $w_{m}(M)$ and we conjecture the following.

\begin{conjecture}
	Let $M$ be a small cover over $P=\underset{i=1}{\overset{k}{\prod}} \Delta^{n_i}$ with $n_i \geq 2^t$ with an associated matrix $A$. Then the first $2^{t+1}-1$ Stiefel- Whitney classes of $M$ are zero if and only if 	for any $S \subseteq \{1,2,\cdots,k\}$
	of size less than equal to $t+1$, $k_{S}=|\{i| \ a_{si}=1 \ \mathrm{for \ any \ } s\in S \}|$ is congruent to $-1$ modulo $2^{t+1}$ when $|S|=1$ and is congruent to $0$ modulo $2^{t+1-|S|}$, otherwise.
\end{conjecture}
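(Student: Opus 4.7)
The plan is to combine a Wu-formula reduction with a direct, inductive polynomial analysis of the classes $w_{2^s}$, extending the explicit computations performed for $w_2, w_3,$ and $w_4$ in the paper.

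First, by the Wu-formula argument used in the proof of the preceding corollary, the vanishing of $w_1,\ldots,w_{2^{t+1}-1}$ is equivalent to the vanishing of $w_{2^s}(M)$ for each $s = 0, 1, \ldots, t$, since any $w_m$ with $m \in (1, 2^{t+1})$ not a power of $2$ lies in the ideal generated by lower Stiefel-Whitney classes and their Steenrod squares. The hypothesis $n_i \geq 2^t$ further ensures that the Stanley-Reisner ideal $I$ imposes no relations in degrees $\leq 2^t$, so for $s \leq t$, $w_{2^s}(M) = 0$ in cohomology is equivalent to each monomial coefficient of the polynomial expression for $w_{2^s}(M)$ (coming from formula (\ref{eq:total})) being zero modulo $2$.

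To work with these coefficients, I would group the rows of $A$ by their supports. Letting $r_T$ be the number of rows of $A$ whose support equals $T$, formula (\ref{eq:total}) restricted to our setting becomes
$$w(M) \equiv \prod_{\emptyset \neq T \subseteq \{1,\ldots,k\}} \Bigl(1+\sum_{q \in T} x_{q0}\Bigr)^{e_T} \pmod{I},$$
with $e_{\{q\}} = 1+r_{\{q\}}$ and $e_T = r_T$ for $|T|\geq 2$; the statistics $k_S$ relate to the $r_T$'s by $k_S = \sum_{T \supseteq S} r_T$. For a fixed support $S$ and a monomial $\prod_{q\in S}y_q^{e_q}$ of degree $2^s$, setting $y_q = 0$ for $q \notin S$ shows that its coefficient in $w(M)$ equals the coefficient in
$$\prod_{\emptyset \neq T'\subseteq S}\Bigl(1+\sum_{q\in T'} y_q\Bigr)^{f_{T'}},$$
where $f_{T'} = [|T'|=1] + \sum_{U \subseteq S\setminus T'} (-1)^{|U|} k_{T'\cup U}$ is obtained by M\"obius inversion.

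The argument then proceeds by induction on $|S|$. When $|S|=1$, say $S = \{1\}$, the only monomial is $y_1^{2^s}$, whose coefficient is $\binom{1+k_{\{1\}}}{2^s}$; by Lucas's theorem these vanish mod $2$ for every $s \leq t$ if and only if the binary expansion of $1+k_{\{1\}}$ has no $1$'s in positions $0,1,\ldots,t$, i.e.\ $k_{\{1\}} \equiv -1 \pmod{2^{t+1}}$. For $|S|\geq 2$, assume inductively that the conjecture's conditions hold for all $T$ with $|T|<|S|$. The $2$-adic Frobenius identity $(1+z)^n \equiv \prod_j (1+z^{2^j})^{n_j} \pmod 2$, where $n=\sum_j n_j 2^j$ is the binary expansion, can then be applied factor-by-factor so that the inductive hypotheses on the $k_T$'s become visible in the expansion. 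After the resulting cancellations, the surviving constraint from the coefficients of all monomials supported on $S$ collapses to the single bit-level congruence on $k_S$ predicted by the conjecture. Sufficiency in both directions follows directly from the same coefficient formulas.

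The main obstacle is the combinatorial bookkeeping in this last step: one must verify that the many non-squarefree monomials with support $S$ produce no conditions beyond the single predicted one on $k_S$, and identify precisely which residues of $k_S$ survive the Lucas-theorem cancellations. This is the same type of computation carried out by hand for $w_3$ and $w_4$ in Section~\ref{sect:higherSW}, but the $2$-adic / Frobenius perspective should permit a uniform treatment once the inductive set-up is in place.
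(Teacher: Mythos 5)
This statement is a \emph{conjecture} in the paper --- the authors explicitly say they ``believe'' it based on the low-degree computations of $w_3$ and $w_4$ in Section~\ref{sect:higherSW}, and they supply no proof. So there is no paper argument to compare against; your proposal can only be judged on its own terms.

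Your proposal correctly carries out the routine reductions but leaves the substance of the conjecture untouched. The Wu-formula argument reducing everything to the vanishing of $w_{2^s}$ for $0\leq s\leq t$ is valid; so is the observation that $n_i\geq 2^t$ keeps the Stanley--Reisner ideal out of the relevant degrees, the support-grouping rewrite of equation~(\ref{eq:total}), the M\"obius-inverted exponents $f_{T'}$, and the $|S|=1$ base case via Lucas' theorem. But the inductive step --- showing that, assuming the congruences on $k_T$ for $T\subsetneq S$, the simultaneous vanishing of the coefficients of \emph{all} degree-$2^s$ monomials with support $S$ (for every $s\leq t$) collapses to a single congruence on $k_S$ --- \emph{is} the conjecture. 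You flag it as ``combinatorial bookkeeping'' to be handled by the Frobenius identity, but you do not carry it out, and the paper's own treatment of the $t=2$, $|S|\leq 3$ case (the corollary following Theorem~\ref{thm:formula4}) already required a non-trivial residue-table case analysis. There is no evidence offered that a uniform, index-free argument exists; as written this is a plan, not a proof.

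One substantive by-product of your base-case analysis is worth recording: the modulus in the conjecture appears to be off by one. Your Lucas computation gives $k_{\{i\}}\equiv -1 \pmod{2^{t+1}}$ for $|S|=1$, as stated, but specializing to $t=2$ the paper's own immediately preceding corollary requires $k_{\{i,j\}}\equiv 0 \pmod 4$ and $k_{\{i,j,l\}}\equiv 0 \pmod 2$, i.e.\ $k_S\equiv 0 \pmod{2^{\,t+2-|S|}}$, whereas the printed exponent $2^{\,t+1-|S|}$ makes the $|S|=t+1$ condition vacuous (contradicting the explicit bound $|S|\leq t+1$ in the statement). Any completed argument would have to target the corrected congruence.
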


\end{document}